\numberwithin{equation}{section}
\theoremstyle{plain}
\newtheorem{Th}{Theorem}[section]
\newtheorem{Lemma}[Th]{Lemma}
\newtheorem{Prop}[Th]{Proposition}
\newtheorem{Claim}[Th]{Claim}
 \theoremstyle{definition}
\newtheorem{Def}[Th]{Definition}
\newtheorem{?}[Th]{Problem}
\newcommand{\im}{\operatorname{im}}
\author{Antony T.H. Fung}
\date{\today}
\title{Every Jordan curve inscribes uncountably many rhombi}
\begin{document}

\begin{abstract}
We prove that every Jordan curve in $\mathbb{R}^2$ inscribes uncountably many rhombi. No regularity condition is assumed on the Jordan curve.
\end{abstract}

\maketitle

\section{Introduction}

The inscribed square problem is a famous open problem due to Otto Toeplitz in 1911 \cite{toeplitz}, which asks whether all Jordan curves in $\mathbb{R}^2$ inscribe a square. By ``inscribe'', it means that all four vertices lie on the curve. There are lots of results for the case when the Jordan curve is ``nice'' \cite{greene} \cite{hugelmeyer} \cite{matschke} \cite{tao}. However, other than Vaughan's result stating that every Jordan curve in $\mathbb{R}^2$ inscribes a rectangle (see \cite{meyerson}), little progress has been done on the general case.\\
\\
One of the earliest results was Arnold Emch's work \cite{emch}. In 1916, he proved that all piecewise analytic Jordan curves in $\mathbb{R}^2$ with only finitely many ``bad'' points inscribe a square. Emch's approach was that given a direction $\tau$, construct the set of medians $M_\tau$, defined as (roughly speaking) the set of midpoints of pairs of points on the curve that lie on the same line going in direction $\tau$. When the Jordan curve is nice enough, $M_\tau$ is a nice path. Now consider an orthogonal direction $\sigma$ and similarly construct $M_\sigma$. The paths $M_\tau$ and $M_\sigma$ must intersect, and their intersections correspond to quadrilaterals with diagonals perpendicularly bisecting each other, i.e. rhombi. Then by rotating and using the intermediate value theorem, he showed that at some direction, one of those rhombi is a square.\\
\\
In this paper, we will follow a similar approach to show the existence of rhombi. However, without assuming analyticity, $M_\tau$ may not necessarily be a path. To get around this, we will define a new class of object called a \textit{pseudopath} that has properties similar to a path. Then we will show that $M_\tau$ is a pseudopath, and use those properties of pseudopath to proceed the argument.\\
\\
In this paper, we prove the following:
\begin{Th} \label{main} Let $\gamma:S^1\rightarrow\mathbb{R}^2$ be a Jordan curve. Then there exists an open interval of angles such that there exists inscribed rhombi of all these angles. Furthermore, if $\gamma$ does not contain a special corner, then there exists inscribed rhombi of all angles.
\end{Th}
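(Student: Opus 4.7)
The plan is to extend Emch's approach using the pseudopath framework sketched in the introduction. For each direction $\tau$, let $M_\tau \subset \mathbb{R}^2$ denote the set of midpoints of chords of $\gamma$ parallel to $\tau$. First I would verify that $M_\tau$ is a pseudopath joining the two extreme points of $\gamma$ in the direction perpendicular to $\tau$: by the Jordan curve theorem every line of direction $\tau$ meeting the interior bounded by $\gamma$ crosses $\gamma$ in at least two points, and as the line translates these intersections and hence their midpoints vary in the pseudo-continuous manner the new notion is designed to capture.

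Next I would apply the intersection property of pseudopaths to $M_\tau$ and $M_{\tau^\perp}$ (writing $\tau^\perp$ for the orthogonal direction). Since these pseudopaths connect pairs of extreme points of $\gamma$ arranged transversally, they must meet. Each intersection point $p$ is simultaneously the midpoint of a $\tau$-chord (of length $2a$) and of a $\tau^\perp$-chord (of length $2b$), so its four endpoints form a quadrilateral whose diagonals perpendicularly bisect each other, i.e., an inscribed rhombus with acute vertex angle $\theta(\tau) := 2\arctan\bigl(\min(a,b)/\max(a,b)\bigr) \in (0,\pi/2]$.

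Then I would study the dependence of $\theta$ on $\tau$. Away from a discrete set of directions for which $\gamma$ has a special corner, I would show that a selection $p(\tau) \in M_\tau \cap M_{\tau^\perp}$ can be chosen continuously, making $\theta$ continuous on an open arc of directions. An intermediate value argument applied to $\theta$ then yields an open interval of achieved vertex angles, and hence uncountably many inscribed rhombi. Under the no-special-corner hypothesis the continuity is global, and combined with the identity $\theta(\tau) = \theta(\tau^\perp)$ (since the same chord pair appears at $\tau$ and at $\tau^\perp$) a topological sweep-out argument on the circle of directions should cover the full range of vertex angles claimed.

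The principal obstacle I anticipate lies in this continuity step. Without any regularity on $\gamma$, the midpoint set $M_\tau$ need not be a path and can vary discontinuously in Hausdorff distance as $\tau$ moves, so producing a continuous selection $p(\tau)$ together with a continuous choice of the two chord endpoint pairs that determine $a$ and $b$ will require the full strength of the pseudopath formalism developed earlier in the paper. The notion of a special corner ought to be calibrated precisely so that its absence rules out all jump discontinuities of this selection. Degenerate intersections where $M_\tau \cap M_{\tau^\perp}$ contains a whole arc, as happens on lines of symmetry of $\gamma$, would only enlarge the family of rhombi and can be handled by a routine case analysis rather than obstruct the argument.
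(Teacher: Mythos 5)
There is a fundamental misreading of the theorem. In this paper a ``rhombus of angle $\theta$'' (Definition 1.6) is one whose \emph{diagonals} are lines of angle $\theta$ and $\theta+\pi/2$ --- an orientation in the plane --- not the interior vertex angle of the rhombus. You have interpreted ``angle'' as $\theta(\tau)=2\arctan(\min(a,b)/\max(a,b))$, the vertex angle, and you are therefore trying to prove a strictly stronger and different statement. Once the correct reading is fixed, the whole intermediate-value / continuous-selection machinery in your third paragraph --- which you yourself flag as the ``principal obstacle'' --- becomes unnecessary. The paper needs no continuity in $\theta$ at all: Proposition \ref{0 corner} is a \emph{pointwise} statement (``for each $\theta$ with no special corner of angle $\theta$, a rhombus with diagonals of angle $\theta$ exists''), proved exactly by the pseudopath construction and the intersection lemma you describe in your first two paragraphs. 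The ``open interval'' clause then reduces to showing that such $\theta$ form an open set, or, when they do not (because special corners block some angles), that an interval of good angles still exists.

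Your proposal also underestimates the special-corner case analysis. You wave at ``a discrete set of directions for which $\gamma$ has a special corner,'' but nothing rules out a priori that special corners exist for a large set of $\theta$, and the degeneration $A_\theta=B_\theta$ at a special corner really does collapse the pseudopath argument. The paper handles this via Propositions \ref{1 corner} and \ref{2 corner}: with exactly one special corner one rotates slightly to escape it; with two or more, one works near the direction $\theta_0$ joining two special corners $p,q$ and carefully re-runs the median/pseudopath argument in a trimmed parallelogram $R$ and with a modified curve $\overline\gamma$ so that the two pseudopaths stay separated from $p$ and $q$ and still meet in the interior. This is delicate and is where most of the paper's work lies; your plan does not engage with it. In short: your pseudopath-intersection idea for producing a rhombus at a single $\theta$ is the right one and matches the paper, but the IVT-on-vertex-angles layer is both misplaced (wrong reading of the statement) and unproved, and the real remaining content --- controlling special corners so that an open interval of diagonal-angles survives --- is missing.
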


In this section, we will define what is a rhombus of an angle and what is a special corner. In this paper, an \textit{inscribed rhombus} is a set of 4 distinct points in $im(\gamma)$ such that those 4 points are the vertices of a rhombus in $\mathbb{R}^2$.\\
\\
A corollary of Theorem \ref{main} is that every Jordan curve in $\mathbb{R}^2$ inscribes uncountably many rhombi.

\begin{Def}[line of angle $\theta$]
A \textit{line of angle $\theta$} means a line in the form $x\sin\theta-y\cos\theta=$constant. i.e. a line making angle $\theta$ with the $x$-axis in an anti-clockwise manner.
\end{Def}

\begin{Def}[special corner of angle $\theta$]
Let $\gamma:S^1\rightarrow\mathbb{R}^2$ be a Jordan curve. A \textit{special corner of $\gamma$ of angle $\theta$} is a point $p\in im(\gamma)$ such that both the line of angle $\theta$ through $p$ and the line of angle $\theta+\dfrac{\pi}{2}$ through $p$ only intersect $im(\gamma)$ at $p$.
\end{Def}

\begin{center}
\includegraphics[width=0.5\textwidth]{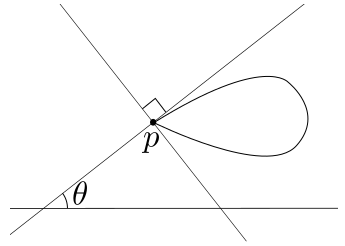}\\
\textbf{Figure 1:} $p$ is a special corner of angle $\theta$
\end{center}

\begin{Def}[special corner]
Let $\gamma:S^1\rightarrow\mathbb{R}^2$ be a Jordan curve. A point $p$ is a \textit{special corner} of $\gamma$ if it is a special corner of $\gamma$ of at least one angle $\theta$.
\end{Def}

Intuitively, a special corner is a point on $im(\gamma)$ such that translating to the origin and rotating, the entire curve lies within a single quadrant.

\begin{Def}[rhombus of angle $\theta$]
We say that a rhombus in $\mathbb{R}^2$ is a \textit{rhombus of angle $\theta$} if the two diagonals are lines with angles $\theta$ and $\theta+\dfrac{\pi}{2}$ respectively.
\end{Def}

Clearly, ``rhombus of angle $\theta$'' and ``rhombus of angle $\theta+\frac{\pi}{2}$'' are the same concept.\\
\\
Now we can state Theorem \ref{main} again in a more precise manner. We divide it into three separate statements.

\begin{Prop} \label{0 corner} Let $\gamma:S^1\rightarrow\mathbb{R}^2$ be a Jordan curve and $\theta\in\mathbb{R}$. If there is no special corner of angle $\theta$, then there exists an inscribed rhombus of angle $\theta$.
\end{Prop}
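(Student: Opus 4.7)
The plan is to adapt Emch's median construction via the pseudopath machinery the paper develops later. For an angle $\theta$, define the median set
\[
M_\theta = \Bigl\{\tfrac{p+q}{2} : p, q \in \im(\gamma),\ p \neq q,\ \text{the line through } p \text{ and } q \text{ has angle } \theta\Bigr\}.
\]
The key algebraic observation is that any $m \in M_\theta \cap M_{\theta+\pi/2}$ produces four points on $\gamma$: endpoints $p_1, q_1$ of a chord of angle $\theta$ with midpoint $m$, and endpoints $p_2, q_2$ of a chord of angle $\theta+\pi/2$ with midpoint $m$. These four points are automatically distinct, because $p_1, q_1$ sit at nonzero displacement from $m$ along direction $\theta$ while $p_2, q_2$ sit at nonzero displacement along the perpendicular direction $\theta+\pi/2$. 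The two chords perpendicularly bisect each other at $m$, so $\{p_1, q_1, p_2, q_2\}$ is an inscribed rhombus of angle $\theta$. Hence it suffices to show $M_\theta \cap M_{\theta+\pi/2} \neq \emptyset$.

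Next I would invoke the pseudopath structure of $M_\theta$. I expect the paper to establish in a later section that $M_\theta$ is a pseudopath whose two ``endpoints'' are the extremal points of $\im(\gamma)$ in direction $\theta+\pi/2$, and symmetrically that $M_{\theta+\pi/2}$ is a pseudopath with endpoints at the extremal points in direction $\theta$. Traversing the Jordan curve $\gamma$ once, these four extremal points occur in interlocking cyclic order, so the endpoints of one pseudopath separate the endpoints of the other along $\gamma$. Since both pseudopaths lie inside the convex hull of $\gamma$, a pseudopath analog of the Jordan curve theorem, proved as part of the pseudopath machinery, forces $M_\theta$ and $M_{\theta+\pi/2}$ to share a common point $m$.

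Finally, I need to ensure that $m$ is a genuine element of both median sets rather than only a limit point of both pseudopaths at which no honest chord exists. The only way $m$ could fail to correspond to an actual midpoint in both directions is if $m \in \im(\gamma)$ and both the line of angle $\theta$ through $m$ and the line of angle $\theta+\pi/2$ through $m$ meet $\im(\gamma)$ only at $m$. But this is precisely the definition of a special corner of angle $\theta$, which contradicts our hypothesis. Therefore $m$ yields four distinct points of $\gamma$ that form a rhombus of angle $\theta$.

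The main obstacle is the middle step: upgrading $M_\theta$ to a pseudopath and proving a pseudopath intersection theorem strong enough to force a crossing under the interlocking-endpoints condition. Once that topological machinery is in hand, the algebraic identification of intersections with rhombi and the reduction to the no-special-corner hypothesis are essentially bookkeeping.
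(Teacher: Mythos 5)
Your high-level strategy — build a ``median'' set of midpoints for each of the two perpendicular directions, show it is a pseudopath with endpoints at the extremal midpoints, get an intersection from the interlocking order of those endpoints on a boundary circle, and then rule out degeneracy using the no-special-corner hypothesis — is precisely the paper's strategy, and your algebraic identification of intersections with rhombi is also what the paper does. So the skeleton is right. However, there is a genuine gap hiding in your definition of $M_\theta$, and it is not a detail that the deferred ``pseudopath machinery'' would automatically repair.

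You take $M_\theta$ to be the set of midpoints of \emph{all} pairs of distinct points of $\im(\gamma)$ joined by a chord of angle $\theta$. The paper instead splits $\im(\gamma)$ into two arcs $\gamma_\theta$ and $\Gamma_\theta$ (each running between the two extremal support lines of direction $\theta$ and containing one of the two horizontal extreme points of the middle line), and defines $\mathcal{M}_\theta$ to be midpoints of pairs with one point on each arc, with the endpoint pairs allowed to coincide. This restriction is load-bearing in three places. First, $\mathcal{M}_\theta = g_\theta(f_\theta^{-1}(0))$ is automatically compact; your $M_\theta$ with $p\ne q$ is not closed and its closure contains points with no chord description. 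Second, Lemma~\ref{exists pseudopath} is applied to the zero set $f_\theta^{-1}(0) \subset [0,1]^2$, which separates $(0,1)$ from $(1,0)$ and meets $\partial[0,1]^2$ only at $(0,0)$ and $(1,1)$; that separation and boundary control come from the arc structure and from Claims~\ref{touch at end points only} and \ref{distinct in interior}, and there is no analogous parametrizing square for the all-pairs set. Third, and most seriously, your final degeneracy argument is false for the all-pairs median: a point $m\in\overline{M_\theta}$ that is not an honest midpoint can arise as a limit of collapsing pairs $p_n,q_n\to m$ at a tangency of $\gamma$ with an \emph{interior} line of angle $\theta$, and at such a point the line of angle $\theta$ through $m$ may very well meet $\gamma$ at other, far-away points — so $m$ need not be a special corner, and the contradiction you want does not materialize. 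In the paper's $\mathcal{M}_\theta$, Claim~\ref{distinct in interior} forces a degenerate midpoint $p_1=p_2$ to be a shared endpoint of the two arcs, which is necessarily the unique intersection of $\gamma$ with an extremal support line; pairing that with the same statement for $\mathcal{M}_{\theta+\pi/2}$ and with the strip-projection argument (the midpoint of two points with projection $\ge m_\theta$ can only have projection $m_\theta$ if both do, which collapses a $\theta+\tfrac\pi2$ chord to a point) shows degeneracy happens only at a corner of $Rec_\theta$ and does yield a special corner. That chain of reasoning is what your sketch is implicitly appealing to, but it is unavailable without the arc restriction. The fix is to replace $M_\theta$ by the arc-restricted $\mathcal{M}_\theta$ and run the boundary-touching claims, after which your final paragraph becomes the paper's observation that $\mathcal{M}_\theta\cap\mathcal{M}_{\theta+\pi/2}$ lies in the interior of $Rec_\theta$.
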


In particular, this implies that Jordan curves with no special corners have inscribed rhombi of all angles, and hence satisfying Theorem \ref{main}.

\begin{Prop} \label{1 corner} Let $\gamma:S^1\rightarrow\mathbb{R}^2$ be a Jordan curve with exactly one special corner. Then $\exists\theta_0$ and $\epsilon>0$ such that $\forall\theta\in(\theta_0-\epsilon,\theta_0+\epsilon)$, there is no special corner of angle $\theta$.
\end{Prop}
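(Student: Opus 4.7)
Let $p$ denote the unique special corner of $\gamma$. Since no other point of $\gamma$ is a special corner of any angle, the plan is to exhibit an open interval of angles $\theta$ on which $p$ itself fails to be a special corner of angle $\theta$; any such interval automatically furnishes the required $\theta_{0}$ and $\epsilon$.

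The central object I would use is the set $C$ of line-angles through $p$ that are occupied by the rest of the curve:
\[
C := \left\{\, \arg(x-p) \bmod \pi \;:\; x \in \im(\gamma) \setminus \{p\} \,\right\} \subset \mathbb{R}/\pi\mathbb{Z}.
\]
I would first observe that $C$ is the image of the connected set $\im(\gamma)\setminus\{p\}$ under the continuous map $x\mapsto \arg(x-p)\bmod\pi$, so $C$ is a connected arc in $\mathbb{R}/\pi\mathbb{Z}$. Next I would argue that $C$ has positive length: otherwise all of $\im(\gamma)$ would lie on a single line through $p$, contradicting the fact that $\im(\gamma) \cong S^{1}$ does not embed in $\mathbb{R}$. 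Finally, because $p$ is a special corner of some angle $\theta^{*}$, neither $\theta^{*}$ nor $\theta^{*}+\pi/2$ belongs to $C$; these two line-angles are antipodal in $\mathbb{R}/\pi\mathbb{Z}$, so $C$ is forced inside one of the two open arcs of length $\pi/2$ they cut out.

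To finish, I would pass to the $2$-to-$1$ covering $q:\mathbb{R}/\pi\mathbb{Z}\to \mathbb{R}/\frac{\pi}{2}\mathbb{Z}$. Unwinding the definitions, $p$ is a special corner of angle $\theta$ precisely when $q^{-1}(\theta)\cap C=\emptyset$, so the set of angles at which $p$ \emph{fails} to be a special corner equals $q(C)$. Since the two points of each fibre of $q$ are at distance $\pi/2$ apart, $q$ is injective on the open arc of length $\pi/2$ containing $C$, and therefore $q(C)$ is a connected arc of positive length in $\mathbb{R}/\frac{\pi}{2}\mathbb{Z}$; in particular it contains an open interval $(\theta_{0}-\epsilon,\theta_{0}+\epsilon)$, as required.

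The argument is essentially a topological pigeonhole and I foresee no serious obstacle. The only mildly delicate points are the injectivity of $q$ on the relevant half-circle (immediate from the fibre structure) and the use of the Jordan hypothesis in the form ``$\im(\gamma)$ is not contained in a line''.
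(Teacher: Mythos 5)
Your argument is correct, but it takes a genuinely different route from the paper's. The paper's proof is hands-on: it chooses two points $q_1, q_2 \in \im(\gamma)$ with $p, q_1, q_2$ non-collinear, takes $\theta_0$ to be the angle of the line through $p$ and the midpoint of $q_1q_2$, and lets $\epsilon = \min(\angle q_2pq, \angle qpq_1)$. For $\theta \in (\theta_0-\epsilon,\theta_0+\epsilon)$, the line of angle $\theta$ through $p$ strictly separates $q_1$ from $q_2$; path-connectedness of $\im(\gamma)\setminus\{p\}$ then forces the curve to meet that line away from $p$, so $p$ is not a special corner of angle $\theta$. Your proof replaces this explicit construction with a conceptual one: you package the geometry into the set $C$ of occupied directions through $p$, observe that $C$ is a connected arc of positive length in $\mathbb{R}/\pi\mathbb{Z}$ trapped in an open half-circle (using that $p$ is a special corner of some angle $\theta^*$), and then show that the set of failing angles is exactly $q(C)$ under the $2$-to-$1$ cover $\mathbb{R}/\pi\mathbb{Z}\to\mathbb{R}/\tfrac{\pi}{2}\mathbb{Z}$, which is again a nondegenerate arc. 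What the paper's approach buys is elementarity and a concrete $\epsilon$; what yours buys is a sharper statement — it identifies the \emph{entire} set of angles with no special corner as the arc $q(C)$, not merely one open subinterval of it — and it uses only connectedness rather than path-connectedness. One small point of care you handled correctly but worth noting: $C$ is an arc because connected subsets of a circle are arcs, and $C$ has more than one point because a Jordan curve is not contained in a line; this mirrors the role the non-collinear triple $p,q_1,q_2$ plays in the paper.
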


Together with Proposition \ref{0 corner}, this implies that Jordan curves with exactly one special corner satisfy Theorem \ref{main}.

\begin{Prop} \label{2 corner} Let $\gamma:S^1\rightarrow\mathbb{R}^2$ be a Jordan curve with at least two special corners. Let $p$ and $q$ be distinct special corners of $\gamma$, and suppose that the line passing through $p$ and $q$ is a line of angle $\theta_0$ for some $\theta_0$. Then $\exists\epsilon>0$ such that $\forall\theta\in(\theta_0-\epsilon,\theta_0+\epsilon)$, there exists an inscribed rhombus of angle $\theta$.
\end{Prop}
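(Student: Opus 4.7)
The strategy is to build on Proposition \ref{0 corner}, using the chord joining $p$ and $q$ as a distinguished non-degenerate anchor that certifies a genuine element of the median set $M_{\theta_0}$.

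The first observation is that neither $p$ nor $q$ can be a special corner of angle $\theta_0$: the line of angle $\theta_0$ through $p$ is by hypothesis the line through $p$ and $q$, and it contains the distinct point $q \in \im(\gamma)$, directly contradicting the definition of ``special corner of angle $\theta_0$''. The same argument applies to $q$. In particular, $(p+q)/2$ is a non-degenerate element of $M_{\theta_0}$ arising from the honest chord $pq$.

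I would next show that for any fixed point $r \in \im(\gamma)$, the set $A_r = \{\theta : r \text{ is a special corner of angle } \theta\}$ is closed in $\theta$; this follows from a short compactness argument using the definition of special corner together with the compactness of $\im(\gamma)$. Applying this to $r = p$ and $r = q$, and combining with the previous step, yields an open neighborhood $(\theta_0 - \epsilon_0, \theta_0 + \epsilon_0)$ throughout which neither $p$ nor $q$ is a special corner of angle $\theta$.

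The main step is then to adapt the pseudopath intersection argument underlying Proposition \ref{0 corner}. For $\theta$ in the neighborhood above, a small perturbation of the endpoints of the chord $pq$ along $\gamma$ produces a non-degenerate chord of angle $\theta$ whose midpoint lies near $(p+q)/2$, giving a non-degenerate element of $M_\theta$ close to $(p+q)/2$. Combined with the pseudopath structure of $M_{\theta+\pi/2}$ established earlier in the paper, this forces a non-degenerate intersection of the two pseudopaths near $(p+q)/2$, which corresponds to the desired inscribed rhombus of angle $\theta$.

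The hardest part will be the possible presence of \emph{other} special corners of $\gamma$ which, for angles $\theta$ arbitrarily close to $\theta_0$, contribute degenerate (single-point) elements to $M_\theta \cap M_{\theta+\pi/2}$. The argument must therefore exploit the pseudopath machinery and the explicit anchor point $(p+q)/2$ to guarantee that the intersection extracted near the anchor is non-degenerate, rather than a collapse onto one of these problematic corners.
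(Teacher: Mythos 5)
Your key lemma---that $A_r=\{\theta : r \text{ is a special corner of angle }\theta\}$ is closed---is false, and it fails in exactly the situation the proposition has to handle. Normalize so that $\theta_0=0$, $p$ is the origin, and $q=(w,0)$. Suppose $\gamma$ lies entirely in the closed upper half-plane $y\ge 0$ (which is perfectly consistent with $p,q$ being special corners and is ``Case 2'' in the paper). Then for every $\theta\in(0,\theta_q)$ the function $x\sin\theta-y\cos\theta$ attains its maximum on $\im(\gamma)$ \emph{only} at $q$ (since $x\le w$ with equality only at $q$, and $-y\cos\theta\le 0$), and the same holds for $x\sin(\theta+\tfrac\pi2)-y\cos(\theta+\tfrac\pi2)=x\cos\theta+y\sin\theta$ because $\gamma$ lies in the quadrant at $q$ of angle $\theta_q$. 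Hence both lines of angle $\theta$ and $\theta+\tfrac\pi2$ through $q$ meet $\gamma$ only at $q$: i.e.\ $q$ \emph{is} a special corner of angle $\theta$ for every $\theta\in(0,\theta_q)$, yet $0\notin A_q$. So $A_q$ is not closed, there is no punctured neighborhood of $\theta_0$ avoiding special corners at $q$, and the reduction to Proposition~\ref{0 corner} collapses: for these $\theta$ one has $B_\theta=B_{\theta+\frac\pi2}=q$, the two pseudopaths $\mathcal M_\theta$ and $\mathcal M_{\theta+\frac\pi2}$ share the endpoint $q$, and their intersection at $q$ is a degenerate (single-point) rhombus. This is not a corner case you can wave away; it is generic for convex curves with two corners.

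Your last paragraph acknowledges that degenerate collisions could occur, but it offers no mechanism to avoid them, and the ``perturb the chord $pq$'' heuristic does not supply one: in the half-plane situation above there is no chord of angle $\theta>0$ near $pq$ at all. The paper's actual proof does something structurally different here. It splits into the case where $\gamma$ dips below the line $pq$ (then the four endpoints $A_\theta,B_\theta,A_{\theta+\frac\pi2},B_{\theta+\frac\pi2}$ stay distinct for small $\theta$ and the Section~3 argument applies verbatim), and the above half-plane case, where it builds a parallelogram $R$ strictly inside $Rec_\theta$ bounded away from $p$ and $q$, replaces $\gamma$ by a truncated Jordan curve $\ovl\gamma$ whose median $\ovl{\mathcal M_\theta}$ crosses $R$ ``vertically'' (Claim~\ref{vertical pseudopath}), shows $\mathcal M_{\theta+\frac\pi2}$ crosses $R$ ``horizontally'' (Claims~\ref{above line}, \ref{horizontal pseudopath}), and applies Lemma~\ref{pseudopaths intersect} inside $R$ to force an intersection in $\operatorname{int}(R)$, hence away from the degenerate point $q$. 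You would need to discover something like this $R$-and-truncation device; the closedness route you propose is a dead end.
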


This implies that Jordan curves with at least two special corners satisfy Theorem \ref{main}.\\
\\
Together, Propositions \ref{0 corner}, \ref{1 corner}, \ref{2 corner} imply Theorem \ref{main}.\\
\\
We will quickly prove Proposition \ref{1 corner} first. Then in Section 2 we will develop some machinery in point-set topology to prove Proposition \ref{0 corner}. In Section 3 we will prove Proposition \ref{0 corner}. In Section 4 we will refine the arguments used in Section 3 to prove Proposition \ref{2 corner}, and hence completing the proof of Theorem \ref{main}.

\begin{proof}[Proof of Proposition \ref{1 corner}]
Suppose $p$ is the unique special corner of a Jordan curve $\gamma:S^1\rightarrow\mathbb{R}^2$. Let $q_1,q_2$ be two other points in $im(\gamma)$ such that $p,q_1,q_2$ are not collinear. Let $q$ be the mid-point of $q_1$ and $q_2$. For each $\theta$, let $l_\theta$ be the line of angle $\theta$ through $p$. Let $\theta_0$ be such that $l_{\theta_0}$ passes through $q$. Let $\epsilon=min(\angle q_2pq,\angle qpq_1)$. Then $\forall\theta\in(\theta_0-\epsilon,\theta_0+\epsilon)$, $q_1$ and $q_2$ are on opposite sides of the $l_\theta$. Since $im(\gamma)\backslash\{p\}$ is path-connected, $im(\gamma)$ must intersect $l_\theta$ at a point other than $p$, and hence $p$ is not a special corner of angle $\theta$. Since $p$ is the unique special corner of $\gamma$, there is no special corners of angle $\theta$.

\begin{center}
\includegraphics[width=0.5\textwidth]{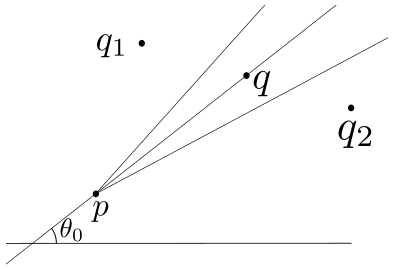}\\
\textbf{Figure 2:} Proof of Proposition \ref{1 corner}
\end{center}

\end{proof}

\section{Some point-set topology}

We begin with our key concept.

\begin{Def}[pseudopath]
Let $X$ be a topological space, and $p,q\in X$. A pseudopath between $p$ and $q$ is a compact set $C\subseteq X$ such that $p,q\in C$ and $\forall$ open set $U$ containing $C$, $\exists$ a path $\gamma$ from $p$ to $q$ satisfying $im(\gamma)\subseteq U$.
\end{Def}

Intuitively, a pseudopath is a compact set that is arbitrarily close to containing a path. We note some properties of pseudopaths.\\
\\
First of all, clearly the image of a path is a pseudopath. So pseudopath is a generalization of path. Another straightforward property of pseudopath is that the image of a pseudopath is a pseudopath:

\begin{Lemma} \label{image of pseudopath} Let $f:X\rightarrow Y$ be a continuous function between topological spaces. If $C$ be a pseudopath between the points $p,q\in X$, then $f(C)$ is a pseudopath between $f(p),f(q)$.
\end{Lemma}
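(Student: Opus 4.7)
The plan is to verify the three defining properties of a pseudopath for $f(C)$ directly, exploiting the continuity of $f$ to pull an arbitrary open neighborhood of $f(C)$ back to an open neighborhood of $C$, and then to push the resulting path in $X$ forward by $f$.

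First I would note the two easy points: $f(C)$ is compact in $Y$ because $C$ is compact in $X$ and continuous images of compact sets are compact; and $f(p),f(q)\in f(C)$ since $p,q\in C$. This reduces the problem to the approximation property.

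For the main step, let $V\subseteq Y$ be an arbitrary open set with $f(C)\subseteq V$. Then $U:=f^{-1}(V)$ is open in $X$ by continuity of $f$, and $C\subseteq U$ because $f(C)\subseteq V$. Applying the pseudopath property of $C$ to the open set $U$, we get a path $\gamma\colon[0,1]\to X$ from $p$ to $q$ with $\operatorname{im}(\gamma)\subseteq U$. Then $f\circ\gamma\colon[0,1]\to Y$ is continuous (as a composition of continuous maps), starts at $f(p)$, ends at $f(q)$, and its image satisfies
\[
\operatorname{im}(f\circ\gamma)=f(\operatorname{im}(\gamma))\subseteq f(f^{-1}(V))\subseteq V.
\]
This gives the required approximating path for $V$, so $f(C)$ is a pseudopath between $f(p)$ and $f(q)$.

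There is essentially no obstacle here: the lemma is a direct transcription of the pseudopath definition through the basic continuity facts (open preimages, compact images, composition of continuous maps). The only thing to be careful about is to pull back the open neighborhood from $Y$ to $X$ before invoking the pseudopath property of $C$, rather than trying to push $C$ forward and work in $Y$ directly.
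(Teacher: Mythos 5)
Your proof is correct and follows exactly the same route as the paper's: verify compactness and membership of the endpoints, pull back an arbitrary open neighborhood of $f(C)$ via $f^{-1}$, obtain a path in $X$ from the pseudopath property of $C$, and push it forward by $f$. There is nothing to add.
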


\begin{proof}
Since C is compact, $f(C)$ is also compact. Since $p,q\in C$, $f(p)$ and $f(q)$ must be in $f(C)$. Suppose $U$ is open in $Y$ and contains $f(C)$. Then $f^{-1}(U)$ is open in $X$ and contains $C$, and hence contains the image of a path $\gamma$ from $p$ to $q$. Then $f\circ\gamma$ is our desired path from $f(p)$ to $f(q)$ with image lying within $U$.
\end{proof}

Now we prove the most crucial property of pseudopath in this paper.

\begin{Lemma} \label{exists pseudopath} Let $A,B,C,D$ be 4 distinct points on $\partial\mathbb{D}^2=S^1$ labeled in that order (i.e. $A,C$ lie in different path components in $\partial\mathbb{D}^2\backslash\{B,D\}$). Let $K$ be a compact set in $\mathbb{D}^2$ that intersects $\partial\mathbb{D}^2$ at $B$ and $D$, and only at $B$ and $D$. If $A,C$ lie in different path components in $\mathbb{D}^2\backslash K$, then $K$ is a pseudopath in $\mathbb{D}^2$ between $B$ and $D$.
\end{Lemma}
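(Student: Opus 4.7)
The plan is to reduce the lemma to a structural claim: $K$ contains a connected compact subset $L$ with $\{B,D\}\subseteq L$ and $L\cap S^1=\{B,D\}$. Given this, the conclusion is immediate. For any open $U\subseteq\mathbb{D}^2$ with $K\subseteq U$, the connected set $L$ lies in a single connected component $U_0$ of $U$; since $\mathbb{D}^2$ is locally path-connected, so is $U$, whence $U_0$ is path-connected. As $B,D\in L\subseteq U_0$, there is a path from $B$ to $D$ in $U_0\subseteq U$, which is exactly what the definition of a pseudopath requires.

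To prove the structural claim I would use the doubling trick. Identify $\mathbb{D}^2$ with the upper closed hemisphere of $S^2$, let $\rho:S^2\to S^2$ be the reflection swapping the hemispheres and fixing $S^1$, and set $K^*:=K\cup\rho(K)$, a $\rho$-invariant compact set with $K^*\cap S^1=\{B,D\}$. The ``fold'' map $\pi:S^2\to\mathbb{D}^2$ equal to the identity on the upper hemisphere and to $\rho$ on the lower is continuous by the pasting lemma, and satisfies $\pi^{-1}(K)=K^*$, $\pi(A)=A$, $\pi(C)=C$. Consequently a path from $A$ to $C$ in $S^2\setminus K^*$ would project under $\pi$ to a path from $A$ to $C$ in $\mathbb{D}^2\setminus K$, contradicting the hypothesis that $A$ and $C$ lie in different path components of $\mathbb{D}^2\setminus K$. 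Hence $K^*$ separates $A$ from $C$ in $S^2$.

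I would then invoke the classical planar-topology theorem (a consequence of Janiszewski's theorem applied in $S^2$) that a compact set separating two points of $S^2$ must contain a connected component that already separates them. Let $\Lambda$ be such a component of $K^*$. If $\Lambda\cap S^1=\emptyset$, then $\Lambda$ lies in one open hemisphere and the opposite closed hemisphere furnishes an $A$-to-$C$ path avoiding $\Lambda$; if $\Lambda\cap S^1=\{B\}$ or $\{D\}$, the arc of $S^1$ through the remaining point of $K^*\cap S^1$ gives such a path. Since both yield contradictions, $\Lambda\cap S^1=\{B,D\}$. Because $\rho(\Lambda)$ is a component of $K^*$ containing $\rho(B)=B\in\Lambda$ and components through a given point are unique, $\Lambda=\rho(\Lambda)$. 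Setting $L:=\Lambda\cap\mathbb{D}^2$, a short symmetry argument shows $L$ is connected: otherwise a decomposition $L=L_1\sqcup L_2$ with $B\in L_1$, $D\in L_2$ would produce $\Lambda=(L_1\cup\rho(L_1))\sqcup(L_2\cup\rho(L_2))$, two disjoint nonempty closed pieces of the connected set $\Lambda$, a contradiction.

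The main obstacle is the step ``a compact separating set contains a separating connected component''; this is a non-trivial but standard result of planar topology, and everything else in the argument is essentially careful bookkeeping with the reflection symmetry $\rho$ and with the intersections of the relevant sets with $S^1$.
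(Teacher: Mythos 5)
Your argument is correct and takes a genuinely different route from the paper's. The paper works with an arbitrary open $U\supseteq K$, replaces $K$ by a finite union $X$ of closed balls inside $U$ (so that $X$ is locally contractible), and then compares $H_1(X)$ with $H_1(X\cup I)$ for an exterior arc $I$ via Alexander duality, the universal coefficient theorem, and a Mayer--Vietoris sequence; the rank jump forces $[B]=[D]$ in $H_0(X)$, giving a path in $X\subseteq U$. You instead extract honest structure from $K$ itself: double to $K^*\subseteq S^2$, observe via the fold map that $K^*$ separates $A$ from $C$, invoke the classical separation theorem (a compact set separating two points of $S^2$ has a component that already separates them --- this is the one genuinely non-elementary input, essentially equivalent in depth to the duality computation the paper runs), pin down that component's trace on $S^1$, and conclude that $K$ contains a connected compact set through $B$ and $D$. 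Once that is in hand, the pseudopath property is immediate from local path-connectedness of open subsets of $\mathbb{D}^2$. Your version in fact establishes the formally stronger structural statement that $B$ and $D$ lie in the same component of $K$, which is cleaner and, for compact $K$, is actually equivalent to the pseudopath property (by a quasi-component argument using Sura--Bura in the other direction).

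One small gap in the last step: when you suppose $L=\Lambda\cap\mathbb{D}^2$ is disconnected, you take a separation $L=L_1\sqcup L_2$ \emph{with $B\in L_1$ and $D\in L_2$}, but an arbitrary separation might place $B$ and $D$ in the same piece. The cleanest fix is to ask less: you only need the component of $B$ in $L$ to contain $D$. If it did not, then (quasi-components of a compact set being components) there would exist a clopen $L_1\subseteq L$ with $B\in L_1$, $D\notin L_1$, and setting $L_2=L\setminus L_1$ your symmetry argument applies verbatim. Alternatively, if both $B,D\in L_1$, then $L_2\cap S^1=\varnothing$, so $L_2$ is disjoint from $\rho(L)$ and $\Lambda=(L_1\cup\rho(L))\sqcup L_2$ is again a forbidden separation of the connected set $\Lambda$. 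Either patch closes the gap; the rest of the write-up is sound.
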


\begin{center}
\includegraphics[width=0.5\textwidth]{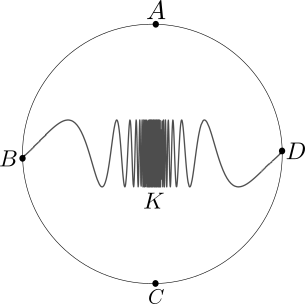}\\
\textbf{Figure 3:} An example of Lemma \ref{exists pseudopath}
\end{center}

Note that $K$ does not necessarily contain the image of a path from $B$ to $D$. For example, if we consider the example given in Figure 3 where $K$ is a set that looks like the graph of $y=\sin(\frac{1}{x})$, $x\neq 0$, with the hole at $x=0$ being filled. It does not contain the image of a path going from $B$ to $D$. That is precisely the reason why we have to deal with pseudopaths instead of just paths.\\
\\
The key to proving Lemma \ref{exists pseudopath} is Alexander duality, which requires local contractibility. Regarding the comment above on the example given in Figure 3, the graph of $y=\sin(\frac{1}{x})$ is not locally contractible at $0$, which prevents us from using Alexander duality directly. Our strategy of proving Lemma \ref{exists pseudopath} is that we first construct a locally contractible set, and then we apply Alexander duality.\\
\\
Now we prove Lemma \ref{exists pseudopath}.

\begin{proof}
We want to show that $K$ is a pseudopath in $\mathbb{D}^2$ between $B$ and $D$. Let $U$ be an arbitrary open set in $\mathbb{D}^2$ containing $K$. As $U$ is arbitrary, it suffices to show that inside $U$, there exists a path going from $B$ to $D$.\\
\\
Embed $\mathbb{D}^2$ in $\mathbb{R}^2$ in a standard way and give $\mathbb{D}^2$ the Euclidean metric inherited from $\mathbb{R}^2$. Now we can treat $\mathbb{D}^2$ as a metric space. Consider small open balls around $B$ and $D$ that are contained in $U$. Call them $B_{\epsilon_B}(B)$ and $B_{\epsilon_D}(D)$. Choose $\epsilon_B$ and $\epsilon_D$ to be small enough such that the closed balls $\overline{B_{\epsilon_B}}(B)$ and $\overline{B_{\epsilon_D}}(D)$ are contained in $U\backslash\{A,C\}$. Around each point in $k\in K\backslash\{B,D\}$, consider an open ball $B_{\epsilon_k}(k)$ contained within $U$. Choose $\epsilon_k$ to be small enough such that the closed ball $\overline{B_{\epsilon_k}}(k)$ is contained in $U\backslash\partial\mathbb{D}^2$.\\
\\
The open balls $B_{\epsilon_B}(B)$, $B_{\epsilon_D}(D)$, and those $B_{\epsilon_k}(k)$'s together form an open cover of $K$. By compactness, there exists a subcover that contains only finitely many $B_{\epsilon_k}(k)$'s. Let $X$ be the union of $\overline{B_{\epsilon_B}}(B)$ and $\overline{B_{\epsilon_D}}(D)$ and all those $\overline{B_{\epsilon_k}}(k)$'s for each $B_{\epsilon_k}(k)$ in the subcover. Since $X$ is a union of finitely many closed balls, it must be locally contractible (which is required for using Alexander duality), and $\mathbb{D}^2\backslash X$ has finitely many path components. Let $n$ be the number of path components in $\mathbb{D}^2\backslash X$. Also, by construction, $K\subseteq X\subseteq U\backslash\{A,C\}$, and $\partial\mathbb{D}^2\backslash X$ has exactly 2 path components, one containing $A$ and one containing $C$ (because $\overline{B_{\epsilon_B}}(B)$ and $\overline{B_{\epsilon_D}}(D)$ are the only closed balls used that can intersect $\partial\mathbb{D}^2$).\\
\\
Let $I$ be an arc of a very big circle in $\mathbb{R}^2$ that connects $B$ and $D$. Choose $I$ be the arc that only intersects $\mathbb{D}^2$ at $B$ and $D$.

\begin{center}
\includegraphics[width=0.5\textwidth]{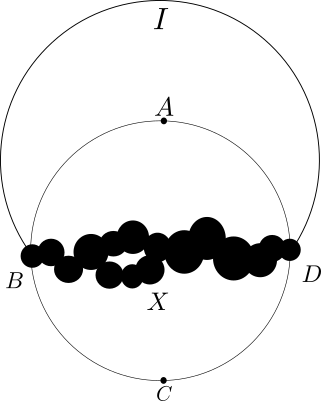}\\
\textbf{Figure 4:} $X$ and $I$. $X$ is a finite union of closed balls that covers $K$.
\end{center}

Here, we view $S^2$ as the one point compactification of $\mathbb{R}^2$.\\
\\
Inside $S^2\backslash X$, $A$ and $C$ are in the same path component, and the path components of $A$ and $C$ in $\mathbb{D}^2\backslash X$ are the only ones that merged when embedded in $S^2\backslash X$ because there are only 2 path components in $\partial\mathbb{D}^2$. Hence, $H_0(S^2\backslash X)=\mathbb{Z}^{n-1}$. Note that $\mathbb{Z}$ coefficients are implicit. By Alexander duality and universal coefficient theorem, $H_1(X)=\mathbb{Z}^{n-2}$.\\
\\
Now we consider $X\cup I$. In $S^2\backslash (int(\mathbb{D}^2)\cup I)$, $I$ separates $A$ and $C$ into different path components. Note that $S^2\backslash (X\cup I)$ is the union of the two closed subspaces $S^2\backslash (int(\mathbb{D}^2)\cup I)$ and $\mathbb{D}^2\backslash X$, and the two closed subspaces intersect at $\partial\mathbb{D}^2\backslash X$. $\partial\mathbb{D}^2\backslash X$ has only 2 path components, the one with $A$ and the one with $C$. Hence, all the path components in $\mathbb{D}^2\backslash X$ except for the two containing $A$ or $C$ remain being separate path components in $S^2\backslash (X\cup I)$. Those two path components containing $A$ or $C$ also remain separate because they are separate in $S^2\backslash (int(\mathbb{D}^2)\cup I)$. Hence, $H_0(S^2\backslash (X\cup I))=\mathbb{Z}^n$. By Alexander duality and universal coefficient theorem, $H_1(X\cup I)=\mathbb{Z}^{n-1}$. Therefore, $H_1(X)\not\cong H_1(X\cup I)$.\\
\\
Now we consider the Mayer-Vietoris sequence\\
$\cdots\rightarrow H_1(\{B,D\})\rightarrow H_1(X)\oplus H_1(I)\rightarrow H_1(X\cup I)\rightarrow H_0(\{B,D\})\rightarrow H_0(X)\oplus H_0(I)\rightarrow\cdots$\\
\\
Note that $H_1(\{B,D\})$ and $H_1(I)$ are $0$, and $H_1(X)\not\cong H_1(X\cup I)$. So, the map $H_1(X\cup I)\rightarrow H_0(\{B,D\})$ is not the zero map, and hence the map $H_0(\{B,D\})\rightarrow H_0(X)\oplus H_0(I)$ cannot be injective. Therefore, under the map $H_0(\{B,D\})\rightarrow H_0(I)$ induced by inclusion, $[B]$ and $[D]$ must be mapped to the same path component in $X$, and hence there exists a path within $X$ that goes from $B$ to $D$.\\
\\
As $X\subseteq U$, the proof of Lemma \ref{exists pseudopath} is completed.
\end{proof}

We prove one more property of pseudopaths before we go back to proving our theorem about Jordan curves.

\begin{Lemma} \label{pseudopaths intersect} Let $A,B,C,D$ be 4 distinct points on $\partial\mathbb{D}^2=S^1$ labeled in that order (i.e. $A,C$ lie in different path components in $\partial\mathbb{D}^2\backslash\{B,D\}$). Let $K$ be a pseudopath in $\mathbb{D}^2$ between $A,C$, and $L$ be a pseudopath in $\mathbb{D}^2$ between $B,D$. Then $K\cap L\neq\varnothing$.
\end{Lemma}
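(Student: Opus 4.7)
The plan is to argue by contradiction. Assume $K\cap L=\varnothing$. Since $\mathbb{D}^2$ is a metric space (hence normal) and $K,L$ are compact, we may choose disjoint open sets $U,V\subseteq\mathbb{D}^2$ with $K\subseteq U$ and $L\subseteq V$. Applying the pseudopath property of $K$ to the open set $U$ yields a path $\gamma$ from $A$ to $C$ with $\im(\gamma)\subseteq U$; symmetrically, applying the pseudopath property of $L$ to $V$ yields a path $\delta$ from $B$ to $D$ with $\im(\delta)\subseteq V$. Because $U\cap V=\varnothing$, the images $\im(\gamma)$ and $\im(\delta)$ are disjoint.

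The remaining task is to derive a contradiction from the existence of two \emph{actual} paths $\gamma,\delta$ in $\mathbb{D}^2$ with disjoint images whose endpoints are interleaved in the cyclic order $A,B,C,D$ on $\partial\mathbb{D}^2$. This is a classical planar fact: up to a homeomorphism sending $A,B,C,D$ to the midpoints of the four sides of a square, it reduces to the standard statement that any path crossing a square from bottom to top must meet any path crossing it from left to right. One standard route is to first extract a simple subarc from $\gamma$ (any path between two distinct points contains an arc with the same endpoints), close it into a Jordan curve using an auxiliary arc in $\mathbb{R}^2\setminus\mathrm{int}(\mathbb{D}^2)$ joining $A$ and $C$ through the exterior, and apply the Jordan curve theorem to conclude that $B$ and $D$ lie in opposite complementary regions, so that the path $\delta$ from $B$ to $D$ must cross $\gamma$ --- contradicting disjointness.

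The only essential content of the argument is the normality step converting a pair of disjoint compact sets into a pair of disjoint \emph{open} neighborhoods; this is precisely why the pseudopath definition is framed in terms of arbitrary open neighborhoods rather than, say, $\epsilon$-neighborhoods. The classical crossing lemma is then pure planar topology, and constitutes the only mild obstacle, mostly because one must verify it in the full generality where $\gamma$ and $\delta$ may touch $\partial\mathbb{D}^2$ at auxiliary points (though not at each other's endpoints, since $B,D\in V$ while $\im(\gamma)\subseteq U$, and symmetrically for $A,C$).
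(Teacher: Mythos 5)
Your proof is correct and follows essentially the same route as the paper: contradiction, use compactness plus the Hausdorff (normality) property to separate $K$ and $L$ by disjoint open sets, invoke the pseudopath definition to get two disjoint actual paths with interleaved endpoints on $\partial\mathbb{D}^2$, and conclude via the classical crossing fact. The only difference is that the paper dismisses the final step as simply ``impossible,'' while you sketch a justification (extract a simple subarc, close it with an exterior arc, apply the Jordan curve theorem) --- a reasonable elaboration, not a different method.
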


\begin{proof}
We proceed by contradiction. Assume that $K\cap L=\varnothing$. As $K$ and $L$ are compact sets in a Hausdorff space, there exists open sets $U$,$V$ such that $K\subseteq U$ and $L\subseteq V$ and $U\cap V=\varnothing$. So, by the definition of pseudopath, there exists a path from $A$ to $C$ and a path from $B$ to $D$ that do not intersect each other, which is impossible.
\end{proof}

\section{Proof of Proposition \ref{0 corner}}

This section is entirely devoted to the proof of Proposition \ref{0 corner}.\\
\\
We fix a Jordan curve $\gamma:S^1\rightarrow\mathbb{R}^2$, and suppose $\theta$ is an angle with no special corners.\\
\\
Note that we will not use the fact that $\theta$ is an angle with no special corners until the end of this section. Claim \ref{median is pseudopath} and everything before holds for all $\theta$, not just for those without special corners.\\
\\
Consider the function $(x,y)\mapsto x\sin\theta-y\cos\theta$ on $im(\gamma)$. As $im(\gamma)$ is compact, the function has a maximum and a minimum. Let $M_\theta$ be the maximum and $m_\theta$ be the minimum. Let $\mu_\theta:=\dfrac{M_\theta+m_\theta}{2}$.\\
\\
Note that the line $x\sin\theta-y\cos\theta=M_\theta$ can be parametrized as $\begin{cases}x=t\cos\theta+M_\theta\sin\theta \\ y=t\sin\theta-M_\theta\cos\theta\end{cases}$. Among all the points that are both in $\im(\gamma)$ and on the line $x\sin\theta-y\cos\theta=M_\theta$, let $MM_\theta$ be the point that attains the maximal $t$ under this parametrization ($MM_\theta$ exists by compactness). Similarly define $mM_\theta$ to be the one with minimal $t$.\\
\\
(Note that $MM_\theta$ and $mM_\theta$ may not necessarily be distinct)\\
\\
Similarly, define $Mm_\theta$ and $mm_\theta$ using the line $x\sin\theta-y\cos\theta=m_\theta$, and define $M\mu_\theta$ and $m\mu_\theta$ using the line $x\sin\theta-y\cos\theta=\mu_\theta$. Let $t^{min}_\theta$ be the $t$ value that realizes $m\mu_\theta$ under that parametrization, and let $t^{max}_\theta$ be the $t$ value that realizes $M\mu_\theta$ under that parametrization. Let $A_\theta$ be the mid-point of $Mm_\theta$ and $mm_\theta$, and $B_\theta$ be the mid-point of $MM_\theta$ and $mM_\theta$.

\begin{center}
\includegraphics[width=0.5\textwidth]{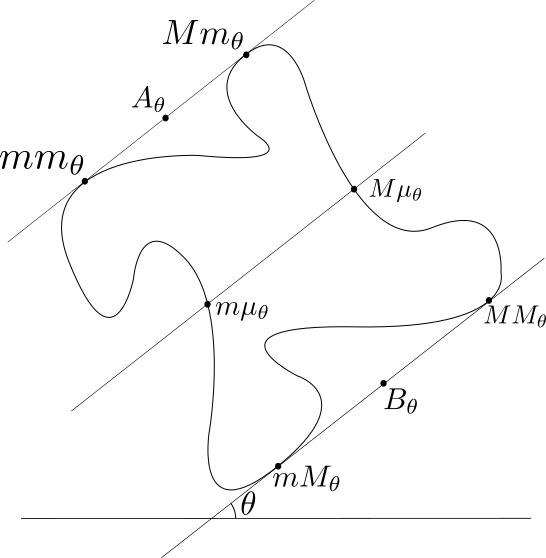}\\
\textbf{Figure 5:} Constructions of $mm_\theta$, $Mm_\theta$, $mM_\theta$, $MM_\theta$, $m\mu_\theta$, $M\mu_\theta$, $A_\theta$, $B_\theta$
\end{center}

Along the Jordan curve $\gamma$, there are two arcs from $mm_\theta$ to $mM_\theta$, one containing $m\mu_\theta$ and one doesn't. Let $\gamma_\theta$ be the arc containing $m\mu_\theta$, parametrized with $\gamma_\theta(0)=mm_\theta$ and $\gamma_\theta(1)=mM_\theta$, and $\gamma_\theta:[0,1]\rightarrow\mathbb{R}^2$ being injective. Let $\gamma^{o}_\theta$ be the arc not containing $m\mu_\theta$, parametrized with $\gamma^{o}_\theta(0)=mm_\theta$ and $\gamma^{o}_\theta(1)=mM_\theta$.\\
\\
Similarly, let $\Gamma_\theta$ be the arc along $\gamma$ from $Mm_\theta$ to $MM_\theta$ that passes through $M\mu_\theta$, parametrized with $\Gamma_\theta(0)=Mm_\theta$ and $\Gamma_\theta(1)=MM_\theta$, and $\Gamma_\theta:[0,1]\rightarrow\mathbb{R}^2$ being injective.\\
\\
Now we prove some properties of $\gamma_\theta$ and $\Gamma_\theta$.

\begin{Claim} \label{touch at end points only}
The following 4 statements are all true:
\begin{enumerate}
\item
$im(\gamma_\theta)$ and the line $x\sin\theta-y\cos\theta=m_\theta$ only intersect at $mm_\theta$

\item
$im(\gamma_\theta)$ and the line $x\sin\theta-y\cos\theta=M_\theta$ only intersect at $mM_\theta$

\item
$im(\Gamma_\theta)$ and the line $x\sin\theta-y\cos\theta=m_\theta$ only intersect at $Mm_\theta$

\item
$im(\Gamma_\theta)$ and the line $x\sin\theta-y\cos\theta=M_\theta$ only intersect at $MM_\theta$
\end{enumerate}
\end{Claim}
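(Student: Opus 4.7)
\textit{Proof plan.} By the symmetry of the construction, it suffices to prove statement (1): that $im(\gamma_\theta)$ intersects the line $\{x\sin\theta-y\cos\theta=m_\theta\}$ only at $mm_\theta$. Statement (4) follows by applying the substitution $\theta\mapsto\theta+\pi$, which flips the $t$-parametrization, swaps the min and max lines, and converts $\gamma_\theta$ to a reparametrization of $\Gamma_\theta$ with $mm_\theta$ playing the role of $MM_\theta$; statements (2) and (3) follow similarly by reversing the $t$-parametrization direction, which swaps $mm_\theta\leftrightarrow Mm_\theta$, $mM_\theta\leftrightarrow MM_\theta$, $m\mu_\theta\leftrightarrow M\mu_\theta$ and thus converts $\gamma_\theta$ into $\Gamma_\theta$.

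I argue by contradiction. Let $\ell_m$ denote the line $\{x\sin\theta-y\cos\theta=m_\theta\}$, and suppose there exists $s^*\in(0,1)$ with $p:=\gamma_\theta(s^*)\in\ell_m$ and $p\neq mm_\theta$. Replacing $s^*$ by the minimum element of the compact set $\gamma_\theta^{-1}(\ell_m)\cap(0,1]$ (which is positive, since $\gamma_\theta$ is injective), I may assume the open sub-arc $\gamma_\theta((0,s^*))$ is disjoint from $\ell_m$. Since $im(\gamma)\subseteq\{x\sin\theta-y\cos\theta\geq m_\theta\}$, this sub-arc in fact lies in the open half-plane $\{x\sin\theta-y\cos\theta>m_\theta\}$.

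Next, form the concatenation $J:=\gamma_\theta([0,s^*])\cup S$, where $S\subseteq\ell_m$ is the closed line segment from $mm_\theta$ to $p$. Since $\gamma_\theta([0,s^*])$ meets $\ell_m$ exactly at its endpoints $mm_\theta$ and $p$ and otherwise lies strictly in $\{x\sin\theta-y\cos\theta>m_\theta\}$, the set $J$ is a Jordan curve. By the Jordan curve theorem, $\mathbb{R}^2\setminus J$ has a bounded component $D_J$, with $\overline{D_J}\subseteq\{x\sin\theta-y\cos\theta\geq m_\theta\}$.

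The main obstacle is to extract a contradiction from this configuration, and this is where I expect the proof to be delicate. My plan is to exploit the defining property that $\gamma_\theta$ passes through $m\mu_\theta$, the point of minimal $t$-coordinate on $im(\gamma)\cap\{x\sin\theta-y\cos\theta=\mu_\theta\}$. Let $s_{m\mu}$ denote the unique preimage of $m\mu_\theta$ under $\gamma_\theta$. I would split into two cases: (i) $s_{m\mu}<s^*$, so that $m\mu_\theta$ lies on the sub-arc (hence on $J$); and (ii) $s_{m\mu}>s^*$, so that $m\mu_\theta$ lies on the continuation $\gamma_\theta((s^*,1])$. In each case I would construct an auxiliary Jordan curve using a sub-arc of $\gamma_\theta$ terminating at $m\mu_\theta$, closed off by a line segment on $\{x\sin\theta-y\cos\theta=\mu_\theta\}$, and then by carefully tracking the inside/outside placements of $mm_\theta$, $mM_\theta$ and $M\mu_\theta$ across $J$, this auxiliary curve, and $\gamma$ itself, exhibit a point of $im(\gamma)\cap\{x\sin\theta-y\cos\theta=\mu_\theta\}$ whose $t$-coordinate is strictly less than that of $m\mu_\theta$, contradicting the definition of $m\mu_\theta$. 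This Jordan-theoretic bookkeeping, and the identification of exactly which Jordan curve to build in each case, is the delicate part of the argument.
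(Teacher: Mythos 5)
Your reduction to statement (1) is fine (the other three cases follow from (1) applied after rotating by $\pi$, reflecting, or relabeling, so ``by symmetry'' is legitimate), and the choice of the \emph{first} return time $s^*$ together with the construction of the Jordan curve $J = \gamma_\theta([0,s^*])\cup S$ is a sound setup. But the core of your proof is missing: you describe a plan to build auxiliary Jordan curves through $m\mu_\theta$ and ``carefully track inside/outside placements,'' and you explicitly acknowledge that this bookkeeping is the delicate part you have not carried out. As written there is no contradiction derived, only the promise of one. Moreover, it is not clear the route you sketch would converge: your target contradiction is ``exhibit a point of $im(\gamma)\cap\{x\sin\theta-y\cos\theta=\mu_\theta\}$ with smaller $t$ than $m\mu_\theta$,'' but nothing in the configuration you have built so far produces such a point, and the sub-arc $\gamma_\theta([0,s^*])$ need not even reach the $\mu_\theta$-line (it may hug $\ell_m$).

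The missing ingredient, and the one the paper actually uses, is the \emph{complementary} arc $\gamma^o_\theta$, which never appears in your plan. The paper's argument: take the sub-arc $\gamma_\theta|_{[a,b]}$ joining the bad point $pm_\theta$ to $m\mu_\theta$, append the ray $L_\theta$ on the $\mu_\theta$-line in the direction of decreasing $t$ from $m\mu_\theta$. This composite simple arc separates the strip $S_\theta$ into two components, with $mm_\theta$ and $mM_\theta$ on opposite sides. Now $\gamma^o_\theta$ must join $mm_\theta$ to $mM_\theta$, must stay inside $S_\theta$ (definition of $m_\theta, M_\theta$), is disjoint from $\gamma_\theta|_{[a,b]}$ (simplicity of $\gamma$), and is disjoint from $L_\theta$ (minimality of $t^{\min}_\theta$ defining $m\mu_\theta$) --- so it is trapped, giving the contradiction. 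The definition of $m\mu_\theta$ is used not to produce a smaller-$t$ point (your target) but to show $\gamma^o_\theta$ cannot cross the ray $L_\theta$. Without bringing $\gamma^o_\theta$ into the picture you do not have a way to close the argument, so this is a genuine gap rather than a cosmetic one.
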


\begin{proof}
By symmetry, we only have to prove (1). We will prove it by contradiction.\\
\\
Suppose there is a point $pm_\theta\in im(\gamma_\theta)$ lying on the line $x\sin\theta-y\cos\theta=m_\theta$ with $pm_\theta\neq mm_\theta$. Both $pm_\theta$ and $m\mu_\theta$ are in $im(\gamma_\theta)$, so we can let $\gamma_\theta|_{[a,b]}$ to be a path either going from $pm_\theta$ to $m\mu_\theta$ or going from $m\mu_\theta$ to $pm_\theta$, for some $0<a<b<1$.\\
\\
Since $\gamma$ is a Jordan curve, it cannot self-intersect. So $im(\gamma^{o}_\theta)\cap im(\gamma_\theta|_{[a,b]})=\varnothing$. By the definition of $M_\theta$ and $m_\theta$, $im(\gamma^{o}_\theta)$ has to lie within the stripe $S_\theta:=\{(x,y) \mid m_\theta\leq x\sin\theta-y\cos\theta\leq M_\theta\}$. Also, by the definition of $m\mu_\theta$, $im(\gamma^{o}_\theta)$ cannot intersect the ray $L_\theta:=\{(t\cos\theta+\mu_\theta\sin\theta,t\sin\theta-\mu_\theta\cos\theta) \mid t\leq t^{min}_\theta\}$.\\
\\
So $im(\gamma^{o}_\theta)\subseteq S_\theta\backslash(im(\gamma_\theta|_{[a,b]})\cup L_\theta)$. However, in $S_\theta$, $im(\gamma_\theta|_{[a,b]})\cup L_\theta$ separates $mm_\theta$ and $mM_\theta$ into two different path components. So we have arrived a contradiction.

\begin{center}
\includegraphics[width=0.4\textwidth]{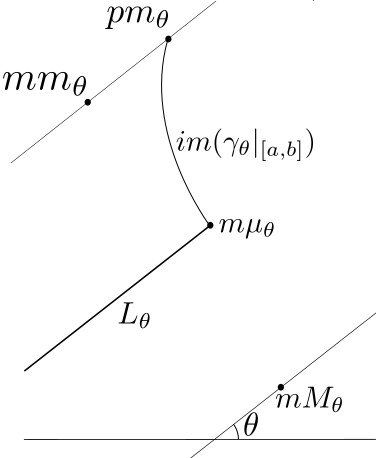}\\
\textbf{Figure 6:} Proof of Claim \ref{touch at end points only}
\end{center}

\end{proof}

\begin{Claim} \label{distinct in interior}
$im(\gamma_\theta|_{(0,1)})\cap im(\Gamma_\theta|_{(0,1)})=\varnothing$
(i.e. $\gamma_\theta$ and $\Gamma_\theta$ don't intersect, except possibly at the endpoints)
\end{Claim}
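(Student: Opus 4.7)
The plan is a proof by contradiction. Suppose that $p \in im(\gamma_\theta|_{(0,1)}) \cap im(\Gamma_\theta|_{(0,1)})$. Since $\gamma : S^1 \to \mathbb{R}^2$ is injective, write $\gamma_\theta = \gamma|_{I_1}$ and $\Gamma_\theta = \gamma|_{I_2}$ for closed arcs $I_1, I_2 \subseteq S^1$ whose boundary points are the $\gamma$-preimages of $\{mm_\theta, mM_\theta\}$ and $\{Mm_\theta, MM_\theta\}$ respectively; then $p = \gamma(s)$ for some $s \in \mathrm{int}(I_1) \cap \mathrm{int}(I_2)$.

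The first step draws a combinatorial consequence from Claim \ref{touch at end points only}. By part (1) the only point of $\gamma_\theta$ on $\{x\sin\theta - y\cos\theta = m_\theta\}$ is $mm_\theta$, so since $Mm_\theta$ also lies on that line, $Mm_\theta \in \gamma_\theta$ forces $Mm_\theta = mm_\theta$. Applying this and the three analogous statements (2)--(4), any endpoint of $I_2$ that belongs to $I_1$ must coincide with an endpoint of $I_1$, and symmetrically. In particular $\partial I_2 \cap \mathrm{int}(I_1) = \emptyset$, so the connected set $\mathrm{int}(I_1)$ sits inside one of the two connected components of $S^1 \setminus \partial I_2$: either $\mathrm{int}(I_2)$ itself, or the open complementary arc. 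In the second case $\mathrm{int}(I_1) \cap \mathrm{int}(I_2) = \emptyset$, contradicting the existence of $s$. In the first case $\mathrm{int}(I_1) \subseteq \mathrm{int}(I_2)$, and by the symmetric argument $I_1 = I_2$; this forces $\{mm_\theta, mM_\theta\} = \{Mm_\theta, MM_\theta\}$, and since $m_\theta < M_\theta$ we must have $mm_\theta = Mm_\theta$ and $mM_\theta = MM_\theta$. Thus $\gamma_\theta = \Gamma_\theta$, an arc whose interior contains both $m\mu_\theta$ and $M\mu_\theta$.

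The hard step is to rule out this last scenario. The complementary arc $\gamma_\theta^o := \gamma \setminus \mathrm{int}(\gamma_\theta)$ is a simple arc in the closed strip $\overline{S} := \{m_\theta \le x\sin\theta - y\cos\theta \le M_\theta\}$ connecting $mm_\theta$ on the bottom boundary to $mM_\theta$ on the top boundary, with interior in the open strip. Treating $\overline{S}$ as a closed topological disk (for instance by compactifying the direction along the strip, or equivalently by extending $\gamma_\theta^o$ with vertical rays to a Jordan curve in $S^2$), the Jordan arc theorem yields two connected components of $\overline{S} \setminus \gamma_\theta^o$: one containing the far-left end of the middle line $\{x\sin\theta - y\cos\theta = \mu_\theta\}$, the other its far-right end. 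The points $m\mu_\theta, M\mu_\theta$ lie on $\gamma_\theta$, hence not on $\gamma_\theta^o$; by the definition of $t^{min}_\theta$ and $t^{max}_\theta$ they realize the minimum and maximum of the along-strip coordinate on $\gamma \cap \{v = \mu_\theta\}$, so they lie strictly to the left and right, respectively, of every middle-line intersection of $\gamma_\theta^o$. Hence $m\mu_\theta$ sits in the far-left component and $M\mu_\theta$ in the far-right one. But $\mathrm{int}(\gamma_\theta)$ is a connected subset of $\overline{S} \setminus \gamma_\theta^o$ containing both, a contradiction. The principal obstacle is exactly this final step---making Jordan arc separation rigorous in the non-compact strip and correctly identifying the two components---whereas the preceding combinatorial step follows essentially from the injectivity of $\gamma$ together with Claim \ref{touch at end points only}.
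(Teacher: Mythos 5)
Your proof is correct, and structurally it tracks the paper's proof (an endpoint analysis reducing to the degenerate situation, followed by a Jordan-type separation argument in the closed strip $S_\theta$), but it reorganizes the two pieces in ways worth noting. First, where the paper splits into four cases according to whether $mM_\theta=MM_\theta$ and whether $mm_\theta=Mm_\theta$ and in Cases 1--3 appeals to figures (``this is the only possible configuration''), you run a single combinatorial argument on the preimage arcs $I_1,I_2\subseteq S^1$: Claim \ref{touch at end points only} gives $\partial I_2\cap\mathrm{int}(I_1)=\varnothing$, connectedness of $\mathrm{int}(I_1)$ then forces either disjoint interiors or $I_1=I_2$, and the latter is possible only when $mm_\theta=Mm_\theta$ and $mM_\theta=MM_\theta$. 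This collapses the paper's Cases 1--3 into one clean step and is, if anything, more airtight than the paper's figure-based treatment. Second, in the remaining degenerate situation your separation argument is the \emph{dual} of the paper's. The paper uses the horizontal separator $\gamma_\theta|_{[a,b]}\cup L_\theta\cup U_\theta$ (a sub-arc of $\gamma_\theta$ joining $m\mu_\theta$ to $M\mu_\theta$, extended by the two rays $L_\theta,U_\theta$) and argues that it separates $mm_\theta$ from $mM_\theta$ in $S_\theta$, so $\gamma^o_\theta$ cannot connect them. You instead take $\gamma^o_\theta$ itself as a transverse (bottom-to-top) separator of the compactified strip, show that $m\mu_\theta$ and $M\mu_\theta$ land in the two different components (using that $t^{min}_\theta, t^{max}_\theta$ are extremal so the segments of the middle line beyond them miss $\gamma^o_\theta$), and conclude that the connected set $\mathrm{int}(\gamma_\theta)$ cannot contain both. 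Both versions lean on the same unformalized Jordan-curve-in-a-strip separation fact --- you flag this gap yourself, and the paper treats it at the same level of informality --- so this is not a defect relative to the paper. Two small points to tidy: when you write ``by the symmetric argument $I_1=I_2$'' you should spell out that $\mathrm{int}(I_1)\subseteq\mathrm{int}(I_2)$ plus the symmetric inclusion of boundaries forces $\partial I_1=\partial I_2$ and hence equality; and the phrase ``vertical rays'' in the compactification step should read rays along the boundary lines $\{\pi_\theta=m_\theta\}$ and $\{\pi_\theta=M_\theta\}$ emanating leftward (or rightward) from $mm_\theta$ and $mM_\theta$, not literal vertical rays.
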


\begin{proof}
\noindent Case 1: If $MM_\theta\neq mM_\theta$ and $Mm_\theta\neq mm_\theta$,
then by Claim \ref{touch at end points only}, $MM_\theta,Mm_\theta\notin im{\gamma_\theta}$ and $mM_\theta,mm_\theta\notin im{\Gamma_\theta}$. So this is the only possible configuration:

\begin{center}
\includegraphics[width=0.4\textwidth]{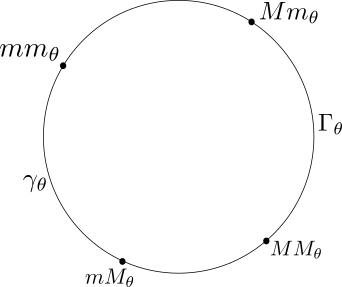}\\
\textbf{Figure 7:} Proof of Claim \ref{distinct in interior} Case 1 (up to homeomorphism)
\end{center}

\noindent Case 2: If $MM_\theta=mM_\theta$ and $Mm_\theta\neq mm_\theta$,
then by Claim \ref{touch at end points only}, $Mm_\theta\notin im{\gamma_\theta}$ and $mm_\theta\notin im{\Gamma_\theta}$. So this is the only possible configuration:

\begin{center}
\includegraphics[width=0.4\textwidth]{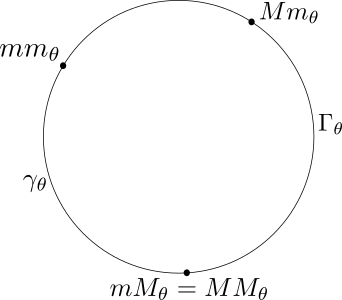}\\
\textbf{Figure 8:} Proof of Claim \ref{distinct in interior} Case 2 (up to homeomorphism)
\end{center}

\noindent Case 3: If $MM_\theta\neq mM_\theta$ and $Mm_\theta=mm_\theta$,
then the proof is similar to Case 2.\\
\\
Case 4: If $MM_\theta=mM_\theta$ and $Mm_\theta=mm_\theta$,
then what we need to prove is proving that $im(\gamma_\theta)\neq im(\Gamma_\theta)$. It suffices to show that $M\mu_\theta\notin im(\gamma_\theta)$. We will prove it by contradiction, in a similar fashion as the proof of Claim \ref{touch at end points only}.\\
\\
Suppose $M\mu_\theta\in im(\gamma_\theta)$. Then we let $\gamma_\theta|_{[a,b]}$ to be a path either going from $M\mu_\theta$ to $m\mu_\theta$ or going from $m\mu_\theta$ to $M\mu_\theta$, for some $0<a<b<1$. Let
\[L_\theta:=\{(t\cos\theta+\mu_\theta\sin\theta,t\sin\theta-\mu_\theta\cos\theta) \mid t\leq t^{min}_\theta\}\]
\[U_\theta:=\{(t\cos\theta+\mu_\theta\sin\theta,t\sin\theta-\mu_\theta\cos\theta) \mid t\geq t^{max}_\theta\}\]

Since $\gamma$ is a Jordan curve, $im(\gamma^{o}_\theta)\cap im(\gamma_\theta|_{[a,b]})=\varnothing$. By the definition of $M_\theta$ and $m_\theta$, $im(\gamma^{o}_\theta)$ has to lie within the stripe $S_\theta:=\{(x,y) \mid m_\theta\leq x\sin\theta-y\cos\theta\leq M_\theta\}$. Also, by the definition of $m\mu_\theta$ and $M\mu_\theta$, $im(\gamma^{o}_\theta)$ cannot intersect $L_\theta\cup U_\theta$.\\
\\
So, $im(\gamma^{o}_\theta)\subseteq S_\theta\backslash(im(\gamma_\theta|_{[a,b]})\cup L_\theta\cup U_\theta)$. However, in $S_\theta$, $im(\gamma_\theta|_{[a,b]})\cup L_\theta\cup U_\theta$ separates $mm_\theta$ and $mM_\theta$ into two different path components. So we have arrived a contradiction.

\begin{center}
\includegraphics[width=0.5\textwidth]{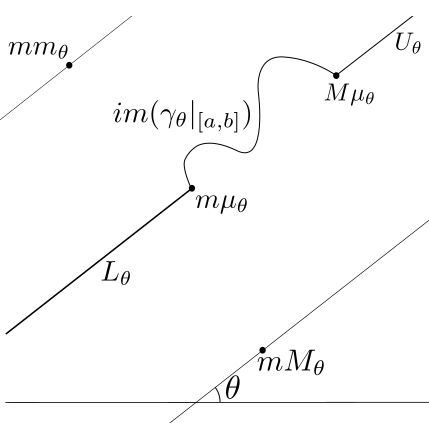}\\
\textbf{Figure 9:} Proof of Claim \ref{distinct in interior} Case 4
\end{center}

\end{proof}

We have finished proving Claim \ref{touch at end points only} and Claim \ref{distinct in interior}, which are properties of $\gamma_\theta$ and $\Gamma_\theta$. Now we define the \textit{median} of the angle $\theta$, denoted as $\mathcal{M}_\theta$:\\
\\
$\mathcal{M}_\theta:=\{\text{mid-point of } p_1 \text{ and } p_2 \mid p_1\in im(\gamma_\theta), \ p_2\in im(\Gamma_\theta), \ p_1 \text{ and } p_2 \text{ lie on the same}$\\
$\text{ line of angle } \theta\}$\\
\\
Remark: We call it ``median'' here because the construction is similar to Emch's construction of medians in his paper about inscribed rhombi in piecewise analytic Jordan curves \cite{emch}. The only difference is that analyticity is not assumed here.\\
\\
We will now prove that $\mathcal{M}_\theta$ is a pseudopath from $A_\theta$ to $B_\theta$. When we were defining the concept of pseudopaths, we worked inside a topological space. Therefore, we will first construct a topological space $Rec_\theta$ for us to work in.\\
\\
Consider $S_\theta:=\{(x,y) \mid m_\theta\leq x\sin\theta-y\cos\theta\leq M_\theta\}$. For all constructions so far, we do the same construction for every angle, not just $\theta$.\\
\\
Let $Rec_\theta:=S_\theta\cap S_{\theta+\frac{\pi}{2}}$. Clearly, $Rec_\theta=Rec_{\theta+\frac{\pi}{2}}$.

\begin{center}
\includegraphics[width=0.5\textwidth]{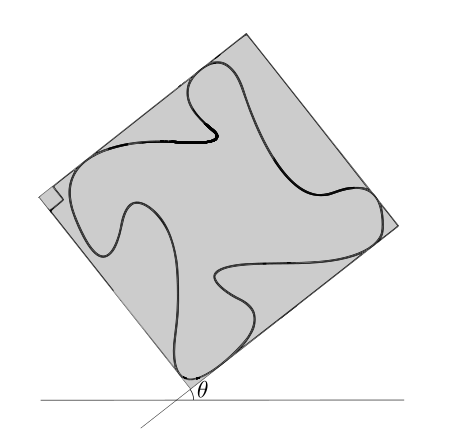}\\
\textbf{Figure 10:} $Rec_\theta$
\end{center}

\begin{Claim} \label{median is pseudopath}
In $Rec_\theta$, $\mathcal{M}_\theta$ is a pseudopath between $A_\theta$ and $B_\theta$.
\end{Claim}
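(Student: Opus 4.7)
The plan is to realize $\mathcal{M}_\theta$ as the continuous image of a separating set inside a topological disc, and then combine Lemma \ref{exists pseudopath} with Lemma \ref{image of pseudopath}. Concretely, write $f_1:=\gamma_\theta$ and $f_2:=\Gamma_\theta$ as continuous (indeed injective) maps $[0,1]\to Rec_\theta$; the image lies in $Rec_\theta$ since $im(\gamma)\subseteq S_\theta\cap S_{\theta+\frac{\pi}{2}}=Rec_\theta$. Let $\ell(x,y):=x\sin\theta-y\cos\theta$ be the level function defining the strip $S_\theta$, and set
\[
P:=\{(s,t)\in[0,1]^2:\ell(f_1(s))=\ell(f_2(t))\},\qquad \phi(s,t):=\tfrac{1}{2}\bigl(f_1(s)+f_2(t)\bigr).
\]
The map $\phi:[0,1]^2\to Rec_\theta$ is continuous (with image in $Rec_\theta$ by convexity of the latter), satisfies $\phi(P)=\mathcal{M}_\theta$, and sends $(0,0)\mapsto A_\theta$ and $(1,1)\mapsto B_\theta$.

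I would then verify four things about $P$ inside the square $[0,1]^2$, viewed as a topological disc whose boundary corners, in cyclic order, are $(0,0),(1,0),(1,1),(0,1)$: (i) $P$ is compact, being the zero set of the continuous function $g(s,t):=\ell(f_1(s))-\ell(f_2(t))$ in the compact square; (ii) $(0,0),(1,1)\in P$, since $\ell(mm_\theta)=\ell(Mm_\theta)=m_\theta$ and $\ell(mM_\theta)=\ell(MM_\theta)=M_\theta$; (iii) $P\cap\partial[0,1]^2=\{(0,0),(1,1)\}$; and (iv) $P$ separates $(1,0)$ from $(0,1)$ inside $[0,1]^2$. Property (iv) is immediate from the intermediate value theorem: $g(1,0)=M_\theta-m_\theta>0$ and $g(0,1)=m_\theta-M_\theta<0$, with strict inequality because a Jordan curve cannot be contained in a single line, so every continuous path in the square from $(1,0)$ to $(0,1)$ must cross $P=\{g=0\}$.

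The main obstacle is property (iii), and Claim \ref{touch at end points only} is the tool custom-built for it. For example, if $(0,t)\in P$ then $\ell(f_2(t))=\ell(f_1(0))=m_\theta$, so $f_2(t)\in im(\Gamma_\theta)\cap\{\ell=m_\theta\}$; part (3) of Claim \ref{touch at end points only} gives $f_2(t)=Mm_\theta=f_2(0)$, and injectivity of $\Gamma_\theta$ then forces $t=0$. The three edges $\{s=1\}$, $\{t=0\}$, $\{t=1\}$ are handled identically using parts (4), (1), (2) of Claim \ref{touch at end points only} respectively.

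With (i)--(iv) established, I fix any homeomorphism $[0,1]^2\to\mathbb{D}^2$ carrying the ``endpoint'' corners $(0,0),(1,1)$ to points $B,D\in\partial\mathbb{D}^2$ and the remaining corners $(1,0),(0,1)$ to $A,C\in\partial\mathbb{D}^2$, so that $A,B,C,D$ alternate on $\partial\mathbb{D}^2$ as required by Lemma \ref{exists pseudopath}. Transporting $P$ across this homeomorphism, Lemma \ref{exists pseudopath} yields that $P$ is a pseudopath between $(0,0)$ and $(1,1)$ in $[0,1]^2$. Finally, applying Lemma \ref{image of pseudopath} to the continuous map $\phi$ concludes that $\mathcal{M}_\theta=\phi(P)$ is a pseudopath between $A_\theta$ and $B_\theta$ in $Rec_\theta$, as desired.
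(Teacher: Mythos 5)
Your proof is correct and follows essentially the same route as the paper: your $P$ is exactly the paper's $f_\theta^{-1}(0)$ (with $g=f_\theta$), your $\phi$ is the paper's $g_\theta$, and the structure---compactness, endpoints, boundary intersection via Claim \ref{touch at end points only}, separation via the intermediate value theorem, then Lemma \ref{exists pseudopath} followed by Lemma \ref{image of pseudopath}---matches the paper step for step. The only differences are cosmetic: you make explicit the homeomorphism $[0,1]^2\cong\mathbb{D}^2$ needed to invoke Lemma \ref{exists pseudopath}, and you note that $M_\theta>m_\theta$ strictly (because a Jordan curve is not contained in a line), which the paper leaves implicit.
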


\begin{proof}
Let $\pi_\theta:\mathbb{R}^2\rightarrow\mathbb{R}$ be the projection $(x,y)\rightarrow x\sin\theta-y\cos\theta$.\\
Let $f_\theta:[0,1]^2\rightarrow\mathbb{R}$ be the function $(r_1,r_2)\rightarrow\pi_\theta\circ\gamma_\theta(r_1)-\pi_\theta\circ\Gamma_\theta(r_2)$.\\
Let $g_\theta:[0,1]^2\rightarrow\mathbb{R}^2$ be the function $(r_1,r_2)\rightarrow\text{mid-point of }\gamma_\theta(r_1)\text{ and }\Gamma_\theta(r_2)$.\\
Clearly, $\pi_\theta,f_\theta,g_\theta$ are continuous as functions between topological spaces. Also, by definition, $\mathcal{M}_\theta=g_\theta(f_\theta^{-1}(0))$.\\
\\
Suppose $\alpha$ is a path in $[0,1]^2$ going from $(0,1)$ to $(1,0)$. Then $f_\theta\circ\alpha$ is a path in $\mathbb{R}$ going from a negative number to a positive number, hence must passes through $0$. So $im(\alpha)\cap f_\theta^{-1}(0)\neq\varnothing$.\\
\\
As $\alpha$ is arbitrary, $(0,1)$ and $(1,0)$ are in different path components in $[0,1]^2\backslash f_\theta^{-1}(0)$. By claim \ref{touch at end points only}, $f_\theta^{-1}(0)$ only intersects $\partial[0,1]^2$ at $(0,0)$ and $(1,1)$. Also, $f_\theta^{-1}(0)$ is compact in $[0,1]^2$ because $\{0\}$ is closed in $\mathbb{R}$ and $[0,1]^2$ itself is compact. So, by Lemma \ref{exists pseudopath}, $f_\theta^{-1}(0)$ is a pseudopath from $(0,0)$ to $(1,1)$. Then by Lemma \ref{image of pseudopath}, $\mathcal{M}_\theta=g_\theta(f_\theta^{-1}(0))$ is a pseudopath between $A_\theta$ and $B_\theta$.
\end{proof}

Now we will complete the proof of Proposition \ref{0 corner}, which is the aim of Section 3. When $\gamma$ has no special corners of angle $\theta$, $A_\theta$,$A_{\theta+\frac{\pi}{2}}$,$B_\theta$,$B_{\theta+\frac{\pi}{2}}$ are all distinct. By Claim \ref{median is pseudopath}, $\mathcal{M}_\theta$ is a pseudopath between $A_\theta$ and $B_\theta$, and $\mathcal{M}_{\theta+\frac{\pi}{2}}$ is a pseudopath between $A_{\theta+\frac{\pi}{2}}$ and $B_{\theta+\frac{\pi}{2}}$. By Lemma \ref{pseudopaths intersect}, $\mathcal{M}_\theta\cap \mathcal{M}_{\theta+\frac{\pi}{2}}\neq\varnothing$.\\
\\
By Claim \ref{touch at end points only} and Claim \ref{distinct in interior}, $\mathcal{M}_\theta$ only intersects $\partial Rec_\theta$ at $A_\theta$ and $B_\theta$, and $\mathcal{M}_{\theta+\frac{\pi}{2}}$ only intersects $\partial Rec_\theta$ at $A_\theta+\frac{\pi}{2}$ and $B_\theta+\frac{\pi}{2}$.\\
\\
Hence, $\mathcal{M}_\theta\cap \mathcal{M}_{\theta+\frac{\pi}{2}}\subseteq int(Rec_\theta)$. So, by Claim \ref{distinct in interior} and the definition of $\mathcal{M}_\theta$ and $\mathcal{M}_{\theta+\frac{\pi}{2}}$, intersection points of $\mathcal{M}_\theta$ and $\mathcal{M}_{\theta+\frac{\pi}{2}}$ correspond to inscribed quadrilaterals with diagonals being lines of angle $\theta$ and angle $\theta+\frac{\pi}{2}$, i.e. inscribed rhombi of angle $\theta$. Since $\mathcal{M}_\theta\cap \mathcal{M}_{\theta+\frac{\pi}{2}}\neq\varnothing$, there exists an inscribed rhombus of angle $\theta$.

\section{Proof of Proposition \ref{2 corner}}

This section is entirely devoted to the proof of Proposition \ref{2 corner}.\\
\\
Let $\gamma$ be a Jordan curve in $\mathbb{R}^2$ with at least two special corners. Let $p,q$ be distinct special corners of $\gamma$. Without loss of generality, using suitable rotations and translations, we can let $p$ be the origin of $\mathbb{R}^2$ and let $q$ be lying in the positive $x$-axis. What we need to prove is that $\exists\epsilon>0$ $\forall\theta\in(-\epsilon,\epsilon)$ $\exists$ an inscribed rhombus of angle $\theta$. By symmetry, we can replace $(-\epsilon,\epsilon)$ by $[0,\epsilon)$.\\
\\
Let $p$ be a special corner of angle $\theta_p$, where $\theta_p\in[0,\frac{\pi}{2})$. Actually, $\theta_p$ cannot be $0$ because $q$ is in $im(\gamma)$.\\
(Note that the choice of $\theta_p$ may not necessarily be unique)\\
\\
Similarly, let $q$ be a special corner of angle $\theta_q$, with $q\in(0,\frac{\pi}{2})$. Then $im(\gamma)$ is inside the following region, touching the boundary only at $p$ and $q$:

\begin{center}
\includegraphics[width=0.5\textwidth]{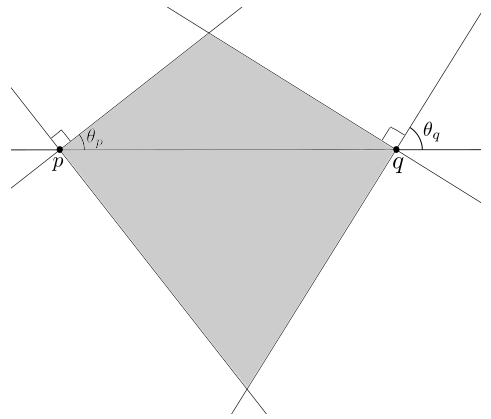}\\
\textbf{Figure 11:} A region where $im(\gamma)$ must lie within
\end{center}

It is impossible for all points on a Jordan curve to be collinear. So, $A_0$ and $B_0$ cannot both lie on the $x$-axis. Without loss of generality (by reflecting along the $x$-axis if necessary), let $A_0$ be lying strictly above the $x$-axis. Let $\epsilon_A$ be the angle $\angle qpA_0$. i.e. the line going through the origin and $A_0$ has angle $\epsilon_A\in(0,\theta_p)$. Let $\epsilon_B$ be the angle $\angle B_0 qp$. i.e. the line going through $B_0$ and $q$ has angle $\epsilon_B\in[0,\theta_q)$.\\
\\
Now we will split it into two cases. Case 1 is when $\epsilon_B>0$, i.e. parts of the Jordan curve lies below the $x$-axis. Case 2 is when $\epsilon_B=0$, i.e. the entire Jordan curve lies on or above the $x$-axis.\\
\\
Case 1: When $\epsilon_B>0$.\\
Let $\epsilon:=min(\epsilon_A,\epsilon_B)$. Then $\forall\theta\in[0,\epsilon)$, $A_{\theta+\frac{\pi}{2}}=p$ (because $\theta<\theta_p$) and $B_{\theta+\frac{\pi}{2}}=q$ (because $\theta<\theta_q$). Also, $A_{\theta}$ is strictly above the line $y=x\tan\theta$ (because $\theta<\epsilon_A$) and  $B_{\theta}$ is strictly below the line of angle $\theta$ passing through $q$ (because $\theta<\epsilon_B$). So, $A_{\theta+\frac{\pi}{2}}$, $B_{\theta+\frac{\pi}{2}}=q$, $A_{\theta}$, $B_{\theta}$ are all distinct, and the argument in Section 3 still goes through. Hence, there exists an inscribed rhombus of angle $\theta$.

\begin{center}
\includegraphics[width=0.7\textwidth]{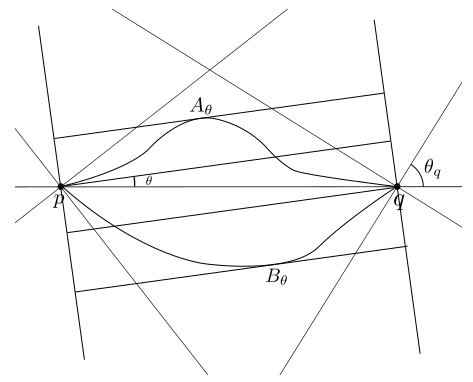}\\
\textbf{Figure 12:} When $B_\theta$ is strictly below the $x$-axis
\end{center}

Case 2: When $\epsilon_B=0$.\\
\\
Recall that $\Gamma_\frac{\pi}{2}:[0,1]\rightarrow\mathbb{R}^2$ is a path from $p=Mm_\frac{\pi}{2}$ to $q=MM_\frac{\pi}{2}$. $\Gamma_\frac{\pi}{2}$ is the path ``above'' and $\gamma_\frac{\pi}{2}$ is the path ``below''. (precise definition is in Section 3)\\
\\
Let $w$ be the $x$-coordinate of $q$ (the width of $Rec_0$), and $h$ be the $y$-coordinate of $A_0$ (the height of $Rec_0$).\\
Let $\epsilon_l:=min\{x\text{-coordinate of }\Gamma_\frac{\pi}{2}(r) \mid r\in[\Gamma_\frac{\pi}{2}^{-1}(Mm_{\epsilon_A}),1]\}$.\\
Let $\epsilon_r:=min\{w-(x\text{-coordinate of }\Gamma_\frac{\pi}{2}(r)) \mid r\in[0,\Gamma_\frac{\pi}{2}^{-1}(mm_0)]\}$.\\
\\
Note that $\epsilon_l$ and $\epsilon_r$ exist because of compactness.

\begin{center}
\includegraphics[width=0.5\textwidth]{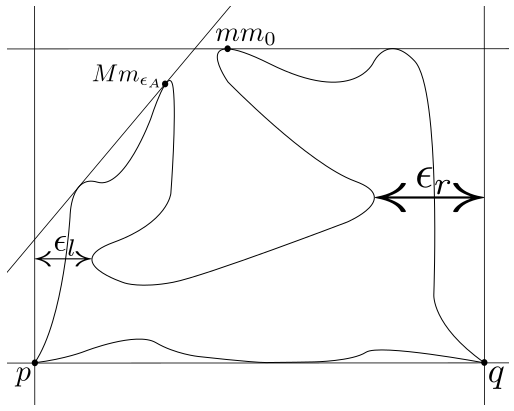}\\
\textbf{Figure 13:} Definition of $\epsilon_l$ and $\epsilon_r$
\end{center}

Let $\epsilon_y:=min\{y \mid (x,y)\in im(\Gamma_\frac{\pi}{2})\text{ and }\frac{\epsilon_l}{8}\leq x\leq w-\frac{\epsilon_r}{8}\}$. Note that $\epsilon_y>0$ because $im(\Gamma_\frac{\pi}{2})$ only intersects the $x$-axis at $p$ and $q$.

\begin{center}
\includegraphics[width=0.5\textwidth]{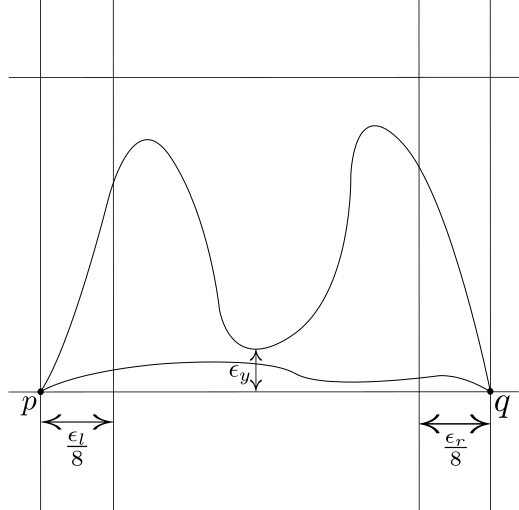}\\
\textbf{Figure 14:} Definition of $\epsilon_y$
\end{center}

We pick $\epsilon>0$ small enough such that
\[\tan\epsilon<\frac{\epsilon_l}{8h},\frac{\epsilon_r}{8h},\frac{\epsilon_y}{2w}\]
and
\[\epsilon<\epsilon_A,\theta_p,\theta_q\]

We want to show that $\forall\theta\in[0,\epsilon)$, there exists an inscribed rhombus of angle $\theta$. We first work on the $\theta=0$ case.\\
\\
The point $A_0$ is strictly above the $x$-axis. Also, $A_\frac{\pi}{2}=p=(0,0)$, $B_0=(\frac{w}{2},0)$, and $B_\frac{\pi}{2}=q=(w,0)$. Those 4 points are distinct, and hence the argument from Section 3 still goes through, and hence an inscribed rhombus of angle $0$ exists.\\
\\
We proceed with the $0<\theta<\epsilon$ case.\\
\\
Let $R$ be the region $\{(x,y) \mid \frac{\epsilon_l}{4}\leq x\leq w-\frac{\epsilon_r}{4}\text{ and }m_\theta\leq x\sin\theta-y\cos\theta\leq 0\}$
Note that $\partial R$ is a parallelogram with two sides being vertical and two sides being line segments of angle $\theta$.

\begin{center}
\includegraphics[width=0.5\textwidth]{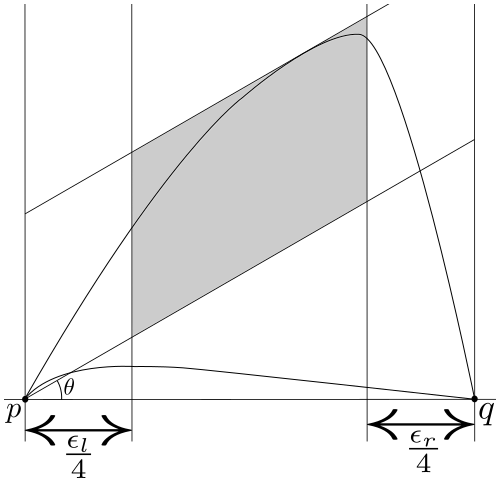}\\
\textbf{Figure 15:} The region $R$
\end{center}

We use a similar argument as the one used in Section 3. But instead of applying Lemma \ref{pseudopaths intersect} to $Rec_\theta$, we will apply it to $R$.\\
\\
\begin{Claim} \label{path order}
If $0\leq\theta_1<\theta_2<\frac{\pi}{2}$, then
$$\Gamma_\frac{\pi}{2}^{-1}(mm_{\theta_2})\leq\Gamma_\frac{\pi}{2}^{-1}(Mm_{\theta_2})\leq\Gamma_\frac{\pi}{2}^{-1}(mm_{\theta_1})\leq\Gamma_\frac{\pi}{2}^{-1}(Mm_{\theta_1})$$
\end{Claim}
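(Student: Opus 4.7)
The plan is to split the four-term chain into three subclaims: (a) $mm_{\theta_i}, Mm_{\theta_i} \in im(\Gamma_{\pi/2})$, so that $\Gamma_{\pi/2}^{-1}$ is defined on them; (b) the inner inequalities $\Gamma_{\pi/2}^{-1}(mm_\theta) \leq \Gamma_{\pi/2}^{-1}(Mm_\theta)$ for each fixed $\theta$; and (c) the central inequality $\Gamma_{\pi/2}^{-1}(Mm_{\theta_2}) \leq \Gamma_{\pi/2}^{-1}(mm_{\theta_1})$, which encodes the monotone dependence of the tangent-set location on $\theta$.

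The unifying observation is that for $\theta \in [0, \pi/2)$ the line $L_\theta := \{x\sin\theta - y\cos\theta = m_\theta\}$ is a supporting line of $im(\gamma)$ from the upper-left, with the entire curve contained in the closed half-plane $H_\theta := \{\pi_\theta \geq m_\theta\}$, where $\pi_\theta(x, y) := x\sin\theta - y\cos\theta$. For (a), the Case 2 hypothesis confines the curve to the closed upper half-plane with $p, q$ the only intersections with the $x$-axis; since $\gamma_{\pi/2}$ passes through the lowest point $m\mu_{\pi/2}$ at $x = w/2$ while $\Gamma_{\pi/2}$ passes through $M\mu_{\pi/2}$, a Jordan-curve-separation argument (comparing a hypothetical tangent point on $\gamma_{\pi/2}$ with the portion of $\Gamma_{\pi/2}$ vertically above it, which would violate $\pi_\theta \geq m_\theta$) shows every point of $L_\theta \cap im(\gamma)$ must sit on $\Gamma_{\pi/2}$. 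For (b), setting $t_\theta(x, y) := x\cos\theta + y\sin\theta$, between any two tangent points on $L_\theta \cap im(\Gamma_{\pi/2})$ the simple arc $\Gamma_{\pi/2}$ must exit into the open half-plane $\{\pi_\theta > m_\theta\}$ and return, so the $r$-order along $\Gamma_{\pi/2}$ coincides with the $t_\theta$-order along $L_\theta$; hence $\Gamma_{\pi/2}^{-1}(mm_\theta)$ is the minimum and $\Gamma_{\pi/2}^{-1}(Mm_\theta)$ the maximum of the tangent-set preimage in $[0, 1]$.

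For (c) the plan is to argue by contradiction: suppose $r_2 := \Gamma_{\pi/2}^{-1}(Mm_{\theta_2}) > r_1 := \Gamma_{\pi/2}^{-1}(mm_{\theta_1})$, and set $A := mm_{\theta_1}$, $B := Mm_{\theta_2}$, $P^\ast := L_{\theta_1} \cap L_{\theta_2}$. A short computation in rotated coordinates (using $\pi_{\theta_1}(B) \geq m_{\theta_1}$ and $\pi_{\theta_2}(A) \geq m_{\theta_2}$ together with $\sin(\theta_2 - \theta_1) > 0$) places $A$ on the $L_{\theta_1}$-boundary-arm of the wedge $H_{\theta_1} \cap H_{\theta_2}$ in direction $(\cos\theta_1, \sin\theta_1)$ from $P^\ast$, and $B$ on the $L_{\theta_2}$-boundary-arm in direction $-(\cos\theta_2, \sin\theta_2)$ from $P^\ast$. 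The closed curve obtained by concatenating the sub-arc $\Gamma_{\pi/2}|_{[r_1, r_2]}$ with the segments $\overline{AP^\ast}$ and $\overline{P^\ast B}$ then encloses a bounded region, and a Jordan-curve argument against the remainder of $\gamma$, namely $\Gamma_{\pi/2}|_{[0, r_1] \cup [r_2, 1]} \cup \gamma_{\pi/2}$, will produce the contradiction: this remainder must connect $p$ to $q$ while respecting the strict inequalities $\pi_{\theta_1} > m_{\theta_1}$ on $[0, r_1)$ and $\pi_{\theta_2} > m_{\theta_2}$ on $(r_2, 1]$ and staying within the upper half-plane, yet it cannot cross the closed curve just constructed.

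The main obstacle is step (c): the wedge picture is geometrically clear, but turning it into a rigorous contradiction requires carefully combining the supporting-line property, the extremality of $A$ on $L_{\theta_1}$ and $B$ on $L_{\theta_2}$, the parametric ordering on $\Gamma_{\pi/2}$, and the Case 2 constraints on $p$ and $q$ simultaneously. Steps (a) and (b) are comparatively routine applications of Claim \ref{touch at end points only} and the Jordan curve theorem, but must still be handled carefully at the boundary values where $\theta$ approaches either $0$ or $\pi/2$.
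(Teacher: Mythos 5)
Your step (a) raises a genuine point that the paper leaves implicit, but the sketch you give for it is based on a backwards inequality. If $z\in im(\gamma_{\pi/2})$ lay on the supporting line $L_\theta$, then applying $\pi_\theta\geq m_\theta=\pi_\theta(z)$ to a point of $\Gamma_{\pi/2}$ on the same vertical line forces that point to lie \emph{at or below} $z$, not above it, so the contradiction you invoke (``the portion of $\Gamma_{\pi/2}$ vertically above it, which would violate $\pi_\theta\geq m_\theta$'') does not appear; settling (a) needs a different route, for instance one that uses the Jordan curve theorem to locate the bounded component between the two arcs.

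The more serious gap is in (b). From ``between any two tangent points $\Gamma_{\pi/2}$ must exit into $\{\pi_\theta>m_\theta\}$ and return'' you cannot deduce that the $r$-order coincides with the $t_\theta$-order: an arc can touch $L_\theta$ at $a$, then $b$, then at a point $c$ with $t_a<t_c<t_b$ by diving into the pocket cut off by its first excursion, and the exit-and-return property holds throughout. What actually pins down the order is a linking argument inside a disk, and that is exactly the paper's proof: it sets $T_{\theta_2}:=\{(x,y)\mid 0\leq x\leq w,\ y\geq 0,\ x\sin\theta_2-y\cos\theta_2\geq m_{\theta_2}\}\cong\mathbb{D}^2$, a convex region containing $im(\gamma)$ on whose boundary $p,q,Mm_{\theta_2},mm_{\theta_2}$ occur in that cyclic order; if $\Gamma_{\pi/2}^{-1}(mm_{\theta_2})>\Gamma_{\pi/2}^{-1}(Mm_{\theta_2})$, the disjoint sub-arcs $\Gamma_{\pi/2}|_{[0,\Gamma_{\pi/2}^{-1}(Mm_{\theta_2})]}$ (from $p$ to $Mm_{\theta_2}$) and $\Gamma_{\pi/2}|_{[\Gamma_{\pi/2}^{-1}(mm_{\theta_2}),1]}$ (from $mm_{\theta_2}$ to $q$) would join linked pairs of boundary points of a disk without meeting, which is impossible. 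Running the same argument in the wedge $T_{\theta_1}\cap T_{\theta_2}$, where $p,q,mm_{\theta_1},Mm_{\theta_2}$ again appear in linked cyclic order (with $mm_{\theta_1}$ on the $L_{\theta_1}$-arm and $Mm_{\theta_2}$ on the $L_{\theta_2}$-arm of the boundary, on opposite sides of $P^\ast$), gives (c) as well; this is shorter and more uniform than the closed-curve construction you outline for (c) and already flag as the main obstacle. So while your decomposition matches the structure of what must be proved, the crucial ordering steps are carried in the paper by a single disk-linking lemma rather than by the order-coincidence heuristic, and that lemma is what your argument is missing.
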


\begin{proof}
We first prove that $\Gamma_\frac{\pi}{2}^{-1}(mm_{\theta_2})\leq\Gamma_\frac{\pi}{2}^{-1}(Mm_{\theta_2})$. We will prove it by contradiction.\\
\\
Suppose $\Gamma_\frac{\pi}{2}^{-1}(mm_{\theta_2})>\Gamma_\frac{\pi}{2}^{-1}(Mm_{\theta_2})$.\\
Let $T_{\theta_2}:=\{(x,y) \mid 0\leq x\leq w\text{ and }y\geq 0\text{ and }x\sin\theta_2-y\cos\theta_2\geq m_{\theta_2}\}$.\\
Then $\Gamma_\frac{\pi}{2}|_{[0,\Gamma_\frac{\pi}{2}^{-1}(Mm_{\theta_2})]}$ is a path in $T_{\theta_2}\cong\mathbb{D}^2$ going from $p$ to $Mm_{\theta_2}$, and $\Gamma_\frac{\pi}{2}|_{[\Gamma_\frac{\pi}{2}^{-1}(mm_{\theta_2}),1]}$ is a path in $T_{\theta_2}$ going from $mm_{\theta_2}$ to $q$. However, $\Gamma_\frac{\pi}{2}|_{[0,\Gamma_\frac{\pi}{2}^{-1}(Mm_{\theta_2})]}$ and $\Gamma_\frac{\pi}{2}|_{[\Gamma_\frac{\pi}{2}^{-1}(mm_{\theta_2}),1]}$ do not intersect, which is impossible.

\begin{center}
\includegraphics[width=0.4\textwidth]{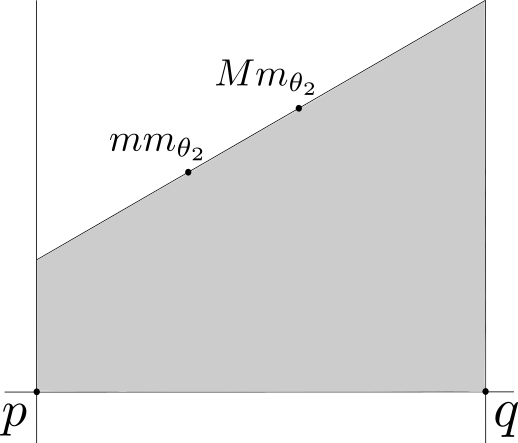}\\
\textbf{Figure 16:} $T_{\theta_2}$
\end{center}

Therefore, we must have $\Gamma_\frac{\pi}{2}^{-1}(mm_{\theta_2})\leq\Gamma_\frac{\pi}{2}^{-1}(Mm_{\theta_2})$.\\
\\
Similarly, we have $\Gamma_\frac{\pi}{2}^{-1}(mm_{\theta_1})\leq\Gamma_\frac{\pi}{2}^{-1}(Mm_{\theta_1})$, and we also have $\Gamma_\frac{\pi}{2}^{-1}(Mm_{\theta_2})\leq\Gamma_\frac{\pi}{2}^{-1}(mm_{\theta_1})$ by considering the diagram below.

\begin{center}
\includegraphics[width=0.65\textwidth]{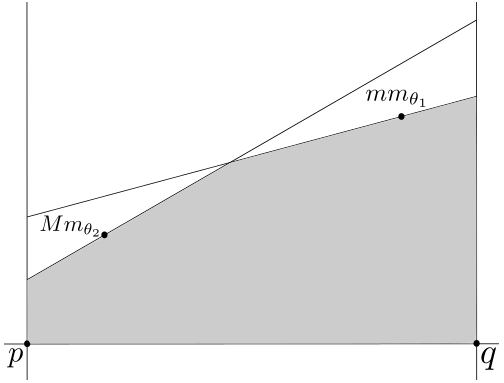}\\
\textbf{Figure 17:} $mm_{\theta_1}$ and $Mm_{\theta_2}$
\end{center}

\end{proof}

\begin{Claim} \label{above line}
All points in $\{(x,y)\in \mathcal{M}_{\theta+\frac{\pi}{2}} \mid \frac{\epsilon_l}{4}\leq x\leq w-\frac{\epsilon_r}{4}\}$ lie strictly above the line $y=x\tan\theta$.
\end{Claim}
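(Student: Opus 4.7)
The plan is to write $m = (x_m, y_m) \in \mathcal{M}_{\theta + \frac{\pi}{2}}$ with $x_m \in [\epsilon_l/4,\, w - \epsilon_r/4]$ as the midpoint of some $p_1 = (x_1, y_1) \in \gamma_{\theta + \frac{\pi}{2}}$ and $p_2 = (x_2, y_2) \in \Gamma_{\theta + \frac{\pi}{2}}$ lying on a common line of angle $\theta + \frac{\pi}{2}$, and then to derive $y_m \geq \epsilon_y/2 > x_m \tan\theta$.

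Step~1 controls the $x$-spread of $p_1, p_2$. A line of angle $\theta + \frac{\pi}{2}$ has direction $(-\sin\theta, \cos\theta)$, so collinearity gives $x_2 - x_1 = -\tan\theta\,(y_2 - y_1)$, and combined with $y \in [0, h]$ on the whole curve (the curve is in the closed upper half-plane and attains its maximum height $h$ at $A_0$) this yields $|x_i - x_m| = \tfrac{1}{2}|x_1 - x_2| \leq \tfrac{h}{2}\tan\theta$. The constraints $\tan\epsilon < \epsilon_l/(8h)$ and $\tan\epsilon < \epsilon_r/(8h)$ then give $|x_i - x_m| < \epsilon_l/16$ and $|x_i - x_m| < \epsilon_r/16$, so $x_m \in [\epsilon_l/4,\, w - \epsilon_r/4]$ forces both $x_1, x_2 \in [\epsilon_l/8,\, w - \epsilon_r/8]$.

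Step~2 identifies the two arcs $\gamma_{\theta + \frac{\pi}{2}}, \Gamma_{\theta + \frac{\pi}{2}}$ with $\gamma_{\frac{\pi}{2}}, \Gamma_{\frac{\pi}{2}}$ as an unordered pair of subsets of $im(\gamma)$. In Case~2 the curve lies in the triangle cut out by $y \geq 0$, $y \leq x\tan\theta_p$ and $y \leq (w - x)\cot\theta_q$, with vertices $p, q$ and their apex $(x_0, y_0)$. Testing the functional $x\cos\theta + y\sin\theta$ at the three vertices (using $\theta < \theta_q$) shows it attains minimum $0$ uniquely at $p$ and maximum $w\cos\theta$ uniquely at $q$, so $Mm_{\theta + \frac{\pi}{2}} = mm_{\theta + \frac{\pi}{2}} = p$ and $MM_{\theta + \frac{\pi}{2}} = mM_{\theta + \frac{\pi}{2}} = q$. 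Thus $\gamma_{\theta + \frac{\pi}{2}}, \Gamma_{\theta + \frac{\pi}{2}}$ cut $\gamma$ at the same pair $\{p, q\}$ as $\gamma_{\frac{\pi}{2}}, \Gamma_{\frac{\pi}{2}}$, and since there are only two arcs of $\gamma$ between $p$ and $q$, $\{im(\gamma_{\theta + \frac{\pi}{2}}), im(\Gamma_{\theta + \frac{\pi}{2}})\} = \{im(\gamma_{\frac{\pi}{2}}), im(\Gamma_{\frac{\pi}{2}})\}$ as an unordered pair. In particular, exactly one of $p_1, p_2$ lies in $im(\Gamma_{\frac{\pi}{2}})$.

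Step~3 combines everything: the point among $p_1, p_2$ lying in $im(\Gamma_{\frac{\pi}{2}})$ has $x$-coordinate in $[\epsilon_l/8,\, w - \epsilon_r/8]$ by Step~1, so its $y$-coordinate is $\geq \epsilon_y$ by the definition of $\epsilon_y$; the other has $y \geq 0$ since the curve is in the closed upper half-plane. Hence $y_m \geq \epsilon_y/2$. On the other hand, $\tan\epsilon < \epsilon_y/(2w)$ gives $x_m \tan\theta \leq (w - \epsilon_r/4)\tan\epsilon < w \cdot \epsilon_y/(2w) = \epsilon_y/2$, so $y_m \geq \epsilon_y/2 > x_m \tan\theta$ as claimed. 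The main obstacle I expect is Step~2: one must carefully verify the triangle containment using the special-corner conditions and the $\epsilon_B = 0$ hypothesis, then check at the three vertices that $p, q$ are the unique extrema of $x\cos\theta + y\sin\theta$, so that the two-arc structures at $\frac{\pi}{2}$ and $\theta + \frac{\pi}{2}$ match; once this is done, Steps~1 and~3 are routine consequences of the line-of-angle geometry and the three smallness constraints on $\tan\epsilon$.
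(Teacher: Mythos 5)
Your proof is correct and follows the same skeleton as the paper's: write a point of $\mathcal{M}_{\theta+\frac{\pi}{2}}$ as a midpoint of two curve points collinear along a line of angle $\theta+\frac{\pi}{2}$, bound the horizontal spread by $h\tan\theta$, force one of the two endpoints into the strip $\frac{\epsilon_l}{8}\leq x\leq w-\frac{\epsilon_r}{8}$ so its height is at least $\epsilon_y$, and finish with the inequality $\frac{\epsilon_y}{2}>w\tan\epsilon\geq x\tan\theta$. Your Steps 1 and 3 are essentially the paper's proof. Where you genuinely differ is Step 2, and this is a real contribution: the paper passes directly from $(x_2,y_2)\in im(\Gamma_{\theta+\frac{\pi}{2}})$ with $x_2\in[\frac{\epsilon_l}{8},w-\frac{\epsilon_r}{8}]$ to $y_2\geq\epsilon_y$, but $\epsilon_y$ is defined via $\Gamma_{\frac{\pi}{2}}$, not $\Gamma_{\theta+\frac{\pi}{2}}$, so this step tacitly assumes $im(\Gamma_{\theta+\frac{\pi}{2}})=im(\Gamma_{\frac{\pi}{2}})$. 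That equality is not automatic: for $\theta>0$ small, $M\mu_{\theta+\frac{\pi}{2}}$ is the highest curve point on a slightly tilted line near $x=\frac{w}{2}$, and for a pathological Jordan curve that point could a priori lie on the other arc, swapping the roles of $\gamma$ and $\Gamma$. You sidestep this by proving only the weaker (and easily checked) fact that, because $p$ and $q$ are the unique extremizers of $x\cos\theta+y\sin\theta$ for $0<\theta<\epsilon$, the pair $\{im(\gamma_{\theta+\frac{\pi}{2}}),im(\Gamma_{\theta+\frac{\pi}{2}})\}$ equals $\{im(\gamma_{\frac{\pi}{2}}),im(\Gamma_{\frac{\pi}{2}})\}$ as an unordered pair, and then noting you only need one of $p_1,p_2$ to lie in $im(\Gamma_{\frac{\pi}{2}})$; combined with your symmetric Step 1 bound on both $|x_1-x_m|$ and $|x_2-x_m|$ (the paper only bounds $|x_2-x|$), this makes the argument indifferent to which arc is which. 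The trade-off is that Step 2 requires carefully verifying, from the two special-corner conditions and $\epsilon_B=0$, that the curve sits in the triangle and that a linear functional on it is uniquely extremized at $p$ and $q$ when $\theta<\theta_q$; you correctly flag this as the delicate point, and your vertex test (the apex value being strictly less than $w\cos\theta$ precisely when $\tan\theta<\cot\theta_q$, which holds since $\theta<\theta_q$) checks out. In short: same strategy, plus a patch that the paper's own write-up arguably needs.
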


\begin{proof}
Let $(x,y)\in\{(x,y)\in \mathcal{M}_{\theta+\frac{\pi}{2}} \mid \frac{\epsilon_l}{4}\leq x\leq w-\frac{\epsilon_r}{4}\}$. We want to show that $y>x\tan\theta$. Since $(x,y)\in \mathcal{M}_{\theta+\frac{\pi}{2}}$, we have $x=\frac{x_1+x_2}{2}$ and $y=\frac{y_1+y_2}{2}$ for some $(x_1,y_1)\in im(\gamma_{\theta+\frac{\pi}{2}})$ and $(x_2,y_2)\in im(\Gamma_{\theta+\frac{\pi}{2}})$, with $(x_1,y_1),(x,y),(x_2,y_2)$ lying on a line of angle $\theta+\frac{\pi}{2}$. Then we have

$$|x_2-x|=\dfrac{|y_2-y|}{|\tan(\theta+\frac{\pi}{2})|}=|y_2-y_0|\tan\theta$$

By the definition of $h$, we must have $|y_2-y|\leq h$. So we have

$$|x_2-x|\leq h\tan\theta<h\tan\epsilon<min(\dfrac{\epsilon_l}{8},\dfrac{\epsilon_r}{8})$$

(the last inequality comes from the definition of $\epsilon$)\\
\\
Since $\frac{\epsilon_l}{4}\leq x\leq w-\frac{\epsilon_r}{4}$ and $|x_2-x|<min(\dfrac{\epsilon_l}{8},\dfrac{\epsilon_r}{8})$, we must have $\frac{\epsilon_l}{8}\leq x_2\leq w-\frac{\epsilon_r}{8}$. Hence, by the definition of $\epsilon_y$, we must have $y_2\geq\epsilon_y$. So we have

$$y=\frac{y_1+y_2}{2}\geq\frac{y_2}{2}\geq\frac{\epsilon_y}{2}>w\tan\epsilon\geq x\tan\theta$$
\end{proof}

Intuitively, Claim \ref{above line} states that the pseudopath $\mathcal{M}_{\theta+\frac{\pi}{2}}$ has to pass through the region $R$. We make it precise in the next claim.\\
\\
Let $Z_{SW}:=(\frac{\epsilon_l}{4},\frac{\epsilon_l}{4}\tan\theta)$, $Z_{SE}:=(w-\frac{\epsilon_l}{4},(w-\frac{\epsilon_l}{4})\tan\theta)$ be the two bottom corners of $R$. Let $Z_W:=\{(\frac{\epsilon_l}{4},y) \mid m_\theta\leq\frac{\epsilon_l}{4}\sin\theta-y\cos\theta\leq 0\}$ be the left vertical edge of $R$, and let $Z_E:=\{(w-\frac{\epsilon_r}{4},y) \mid m_\theta\leq(w-\frac{\epsilon_r}{4})\sin\theta-y\cos\theta\leq 0\}$ be the right vertical edge of $R$.

\begin{center}
\includegraphics[width=0.65\textwidth]{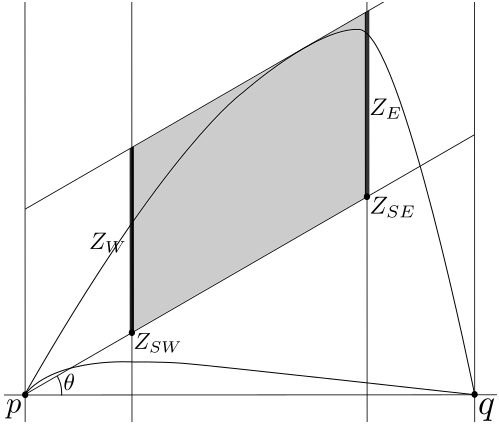}\\
\textbf{Figure 18:} The shaded region is $R$. Showing $Z_{SW}$, $Z_{SE}$, $Z_W$, $Z_E$. 
\end{center}

Let $\mathcal{M}_{\theta+\frac{\pi}{2}}^R:=Z_W\cup Z_E\cup\{(x,y)\in \mathcal{M}_{\theta+\frac{\pi}{2}} \mid \frac{\epsilon_l}{4}\leq x\leq w-\frac{\epsilon_r}{4}\}$. By Claim \ref{above line}, we know that $\mathcal{M}_{\theta+\frac{\pi}{2}}^R\subseteq R$.

\begin{Claim} \label{horizontal pseudopath}
In $R$, $\mathcal{M}_{\theta+\frac{\pi}{2}}^R$ is a pseudopath between $Z_{SW}$ and $Z_{SE}$.
\end{Claim}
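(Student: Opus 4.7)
The plan is to obtain $\mathcal{M}_{\theta+\frac{\pi}{2}}^R$ as a compact superset of a smaller pseudopath built by retracting $\mathcal{M}_{\theta+\frac{\pi}{2}}$ into $R$. The guiding principle is that if a pseudopath from $Z_{SW}$ to $Z_{SE}$ is contained inside a compact set $C$ (with $Z_{SW}, Z_{SE} \in C$), then $C$ is itself a pseudopath from $Z_{SW}$ to $Z_{SE}$, since any open $U \supseteq C$ also contains the smaller pseudopath and hence a path between the endpoints. Compactness of $\mathcal{M}_{\theta+\frac{\pi}{2}}^R$ is immediate (a union of two closed segments and a closed subset of the compact median), and it contains $Z_{SW}, Z_{SE}$ as endpoints of $Z_W, Z_E$.

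To produce the smaller pseudopath, I will construct a continuous retraction $\rho : Rec_\theta \to R$ with $\rho(p) = Z_{SW}$, $\rho(q) = Z_{SE}$, and $\rho(\mathcal{M}_{\theta+\frac{\pi}{2}}) \subseteq \mathcal{M}_{\theta+\frac{\pi}{2}}^R$; then Claim \ref{median is pseudopath} together with Lemma \ref{image of pseudopath} immediately give that $\rho(\mathcal{M}_{\theta+\frac{\pi}{2}})$ is a pseudopath from $Z_{SW}$ to $Z_{SE}$ inside $R$. In the affine coordinates $(u, v) = (x,\, x\sin\theta - y\cos\theta)$, where $R$ is literally the axis-aligned rectangle $[\frac{\epsilon_l}{4},\, w - \frac{\epsilon_r}{4}] \times [m_\theta, 0]$, I define $\rho$ by coordinate-wise clamping:
\[ \rho(u, v) := \bigl(\max\{\tfrac{\epsilon_l}{4},\, \min\{w - \tfrac{\epsilon_r}{4},\, u\}\},\; \max\{m_\theta,\, \min\{0,\, v\}\}\bigr). \]
This map is continuous, restricts to the identity on $R$, and has image $R$. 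Since $p = (0,0)$ has $(u,v) = (0,0)$ and $q = (w, 0)$ has $(u,v) = (w,\, w\sin\theta)$ with $w\sin\theta > 0$, clamping yields $\rho(p) = (\tfrac{\epsilon_l}{4},\, 0)$ and $\rho(q) = (w - \tfrac{\epsilon_r}{4},\, 0)$, which in $(x,y)$-coordinates are exactly $Z_{SW}$ and $Z_{SE}$.

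The substantive point is verifying $\rho(\mathcal{M}_{\theta+\frac{\pi}{2}}) \subseteq \mathcal{M}_{\theta+\frac{\pi}{2}}^R$, and this is where Claim \ref{above line} is indispensable. Take $m \in \mathcal{M}_{\theta+\frac{\pi}{2}}$; since $\mathcal{M}_{\theta+\frac{\pi}{2}} \subseteq Rec_\theta$, we have $v_m \in [m_\theta, M_\theta]$ in every case. If $u_m \in [\frac{\epsilon_l}{4}, w - \frac{\epsilon_r}{4}]$, Claim \ref{above line} forces $v_m < 0$ (being strictly above $y = x\tan\theta$ is exactly $v < 0$), so $\rho$ fixes $m$ and $\rho(m) = m$ lies in the median-in-strip part of $\mathcal{M}_{\theta+\frac{\pi}{2}}^R$. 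If $u_m < \frac{\epsilon_l}{4}$, the $u$-coordinate clamps to $\frac{\epsilon_l}{4}$ and the $v$-coordinate clamps into $[m_\theta, 0]$, so $\rho(m) \in Z_W$. The case $u_m > w - \frac{\epsilon_r}{4}$ is symmetric and gives $\rho(m) \in Z_E$. The only potential obstruction in this plan was the possibility that a median point with $u_m$ in the strip and $v_m > 0$ would clamp down onto the bottom edge of $R$ (and so possibly off $\mathcal{M}_{\theta+\frac{\pi}{2}}^R$), but this is precisely what Claim \ref{above line} rules out.
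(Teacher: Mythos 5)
Your proof is correct, and it takes a genuinely different route from the paper's. The paper argues directly from the definition of pseudopath: starting from an arbitrary open set $U \supseteq \mathcal{M}_{\theta+\frac{\pi}{2}}^R$ in $R$, it manufactures a sequence of auxiliary open sets $U'$, $U''$, $U'''$ in $Rec_\theta$ (removing the region on or below $L$ in the strip, then re-adding the complementary vertical strips), invokes Claim \ref{median is pseudopath} to extract a path $\alpha$ from $p$ to $q$ inside $U'''$, and then trims $\alpha$ between its last hit of $Z_W$ and first subsequent hit of $Z_E$ and splices on segments of $Z_W$ and $Z_E$ to land the endpoints on $Z_{SW}$ and $Z_{SE}$. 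You instead build a single continuous retraction $\rho : Rec_\theta \to R$ by coordinate-wise clamping in the $(u,v)$-coordinates that rectify $R$, push the pseudopath $\mathcal{M}_{\theta+\frac{\pi}{2}}$ forward via Lemma \ref{image of pseudopath}, and finish with the (correct, and worth stating explicitly) observation that a compact superset of a pseudopath with the same endpoints is again a pseudopath. Both proofs use Claim \ref{above line} at the same pressure point — ruling out the possibility that a median point in the middle strip sits on or below $L$ — but your version replaces the paper's delicate open-set surgery and IVT-style path trimming with one map and two already-proved lemmas, which is cleaner, less error-prone (note the paper's proof carries a typo in $Z_{SE}$ and a somewhat confusing auxiliary inclusion $(U\setminus\{Z_{SW},Z_{SE}\})\subseteq U'''$), and makes transparent exactly where the role of Claim \ref{above line} lies.
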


\begin{proof}
Let $U$ be an open set in $R$ and $\mathcal{M}_{\theta+\frac{\pi}{2}}^R\subseteq U$. We want to show that there exists a path inside $U$ going from $Z_{SW}$ to $Z_{SE}$. Since $U$ is open in $R$, $U=U^{'}\cap R$ for some $U^{'}$ open in $Rec_\theta$.\\
\\
Let $U^{''}:=U^{'}\backslash\{(x,y) \mid \frac{\epsilon_l}{4}\leq x\leq w-\frac{\epsilon_r}{4} \text{ and }y\leq x\tan\theta\}$. Then $U^{''}$ is still open, and $(\mathcal{M}_{\theta+\frac{\pi}{2}}^R\backslash\{Z_{SW},Z_{SE}\})\subseteq U^{''}$. By Claim \ref{above line}, the set $\{(x,y) \mid \frac{\epsilon_l}{4}\leq x\leq w-\frac{\epsilon_r}{4} \text{ and }y\leq x\tan\theta\}$ we removed does not contain any points from $\mathcal{M}_{\theta+\frac{\pi}{2}}$.\\
\\
Let $U^{'''}:=U^{''}\cup\{(x,y)\in Rec_\theta \mid x<\frac{\epsilon_l}{4} \text{ or } w-\frac{\epsilon_r}{4}<x\}$. Then $U^{'''}$ is an open set, and we have $\mathcal{M}_{\theta+\frac{\pi}{2}}\subseteq U^{'''}$. Also, by construction, we have $(U\backslash\{Z_{SW},Z_{SE}\})\subseteq U^{'''}$. By Claim \ref{median is pseudopath}, there exists a path $\alpha:[0,1]\rightarrow U^{'''}$ going from $p$ to $q$. Since $U^{'''}$ does not contain any points from $\{(x,y) \mid \frac{\epsilon_l}{4}\leq x\leq w-\frac{\epsilon_r}{4} \text{ and }y\leq x\tan\theta\}$, $\alpha$ must intersect $Z_W$ and $Z_E$.\\
\\
Let $a_0:=max\{t\in[0,1] \mid \alpha(t)\in Z_W\}$ and $a_1:=min\{t\in[a_0,1] \mid \alpha(t)\in Z_E\}$. Then $\alpha|_{[a_0,a_1]}$ is a path in $R$ going from $\alpha(a_0)$ to $\alpha(a_1)$. Concatenating $\alpha|_{[a_0,a_1]}$ with a path inside $Z_W$ going from $Z_{SW}$ to $\alpha(a_0)$ and a path inside $Z_E$ going from $\alpha(a_1)$ to $Z_{SE}$, we get a path inside $U$ going from $Z_{SW}$ to $Z_{SE}$.

\begin{center}
\includegraphics[width=0.5\textwidth]{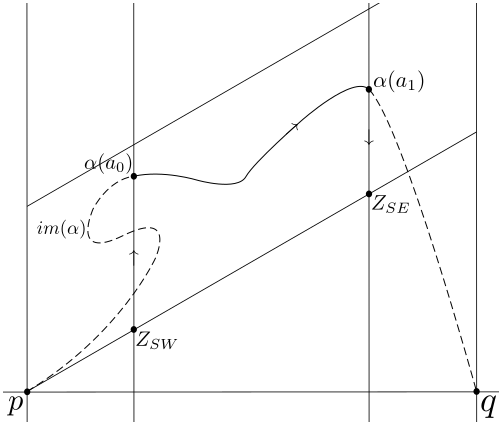}\\
\textbf{Figure 19:} Construction of a path inside $U$ going from $Z_{SW}$ to $Z_{SE}$
\end{center}

\end{proof}

We have finished proving Claim \ref{horizontal pseudopath}, which is an analogue of Claim \ref{median is pseudopath} for $\mathcal{M}_{\theta+\frac{\pi}{2}}$ in $R$. Now we work on an analogue of Claim \ref{median is pseudopath} for $\mathcal{M}_\theta$ in $R$.\\
\\
Let $L$ be the line segment $\{(x,y) \mid 0\leq x\leq w \text{ and }y=x\tan\theta\}$.\\
Let $L_l$ be the line segment $\{(x,y)\in L \mid x\leq\frac{\epsilon_l}{8}\}$, and $L_r$ be the line segment $\{(x,y)\in L \mid w-\frac{\epsilon_r}{8}\leq x\}$.\\
\\
Consider the set of points $S:=\{(x,y)\in im(\Gamma_\frac{\pi}{2}) \mid \frac{\epsilon_l}{8}\leq x\leq w-\frac{\epsilon_r}{8}\}$. By the definition of $\epsilon_y$, whenever $(x,y)\in S$, we have must have

$$y\geq\epsilon_y>2w\tan\epsilon>w\tan\epsilon>x\tan\theta$$

So all points in $S$ lie strictly above L.\\
\\
Let $t_l:=max\{t\in[0,1] \mid \Gamma_\frac{\pi}{2}(t)\in L_l\}$. Note that $t_l$ exists because of compactness and the fact that $p\in L_l$. Let $t_r:=min\{t\in[t_l,1] \mid \Gamma_\frac{\pi}{2}(t)\in L_r\}$. Note that $t_r$ exists because $q$ is below $L$ and all points in $S$ are above $L$. By construction, all points in $im(\Gamma_\frac{\pi}{2}|_{(t_l,t_r)})$ lie strictly above $L$.\\
\\
Let $\overline{\gamma}$ be a Jordan curve formed by concatenating $\Gamma_\frac{\pi}{2}|_{[t_l,t_r]}$ with a path in $L$ going from $\Gamma_\frac{\pi}{2}(t_r)$ to $\Gamma_\frac{\pi}{2}(t_l)$.

\begin{center}
\includegraphics[width=0.5\textwidth]{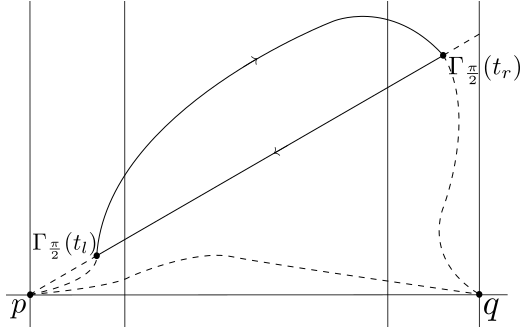}\\
\textbf{Figure 20:} Construction of $\overline{\gamma}$
\end{center}

For every mathematical object we have constructed from $\gamma$, we add a bar on top to denote the same object constructed from $\overline{\gamma}$ instead of $\gamma$.\\
\\
Note that $\overline{M_\theta}=0$ because $im(\overline{\gamma})$ does not go below $L$. So, $\overline{mM_\theta}=\Gamma_\frac{\pi}{2}(t_l)$ and $\overline{MM_\theta}=\Gamma_\frac{\pi}{2}(t_r)$. Now we consider $\overline{mm_\theta}$ and $\overline{Mm_\theta}$. Since the $x$-coordinate of $\Gamma_\frac{\pi}{2}(t_l)$ is less than $\epsilon_l$ and the $x$-coordinate of $\Gamma_\frac{\pi}{2}(t_r)$ is larger than $\epsilon_r$, by the definition of $\epsilon_l$ and $\epsilon_r$, we must have $t_l<\Gamma_\frac{\pi}{2}^{-1}(Mm_{\epsilon_A})$ and $\Gamma_\frac{\pi}{2}^{-1}(mm_0)<t_r$.\\
\\
By Claim \ref{path order}, we have
$$\Gamma_\frac{\pi}{2}^{-1}(Mm_{\epsilon_A})\leq\Gamma_\frac{\pi}{2}^{-1}(mm_\theta)\leq\Gamma_\frac{\pi}{2}^{-1}(Mm_\theta)\leq\Gamma_\frac{\pi}{2}^{-1}(mm_0)$$
Hence, we have $t_l<\Gamma_\frac{\pi}{2}^{-1}(mm_\theta)\leq\Gamma_\frac{\pi}{2}^{-1}(Mm_\theta)<t_r$. So, $mm_\theta$ and $Mm_\theta$ are in $im(\overline{\gamma})$, and hence $\overline{mm_\theta}=mm_\theta$ and $\overline{Mm_\theta}=Mm_\theta$. Therefore, $\overline{A_\theta}=A_\theta$.\\
\\
Let $Z_S:=\overline{B_\theta}=$mid-point of $\Gamma_\frac{\pi}{2}(t_l)$ and $\Gamma_\frac{\pi}{2}(t_r)$, and let $Z_N:=\overline{A_\theta}=A_\theta$. By Claim \ref{median is pseudopath}, in $\overline{Rec_\theta}$, $\overline{\mathcal{M}_\theta}$ is a pseudopath between $Z_S$ and $Z_N$.

\begin{center}
\includegraphics[width=0.5\textwidth]{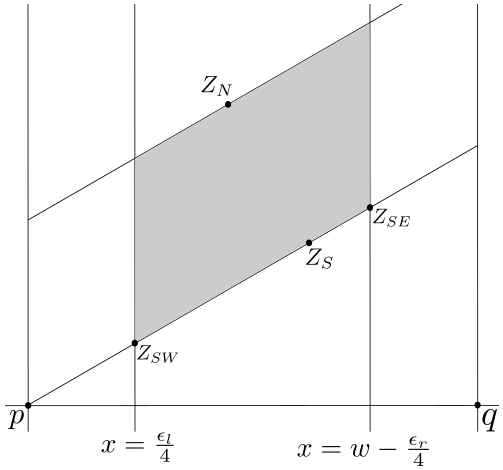}\\
\textbf{Figure 21:} The shaded region is $R$. Showing $Z_{SW}$, $Z_{SE}$, $Z_N$, $Z_S$.
\end{center}

\begin{Claim} \label{vertical pseudopath}
All points in $\overline{\mathcal{M}_\theta}$ have $x$-coordinate strictly between $\frac{\epsilon_l}{4}$ and $w-\frac{\epsilon_r}{4}$.
\end{Claim}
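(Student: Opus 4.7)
The plan is to show that the $x$-coordinate of every point of $\overline{\mathcal{M}_\theta}$ lies in $[\epsilon_l/2,\,w-\epsilon_r/2]$, which is strictly contained in $(\epsilon_l/4,\,w-\epsilon_r/4)$. The proof reduces to (i) identifying which sub-arcs of $\overline{\gamma}$ the curves $\overline{\gamma_\theta}$ and $\overline{\Gamma_\theta}$ actually are, and (ii) invoking Claim \ref{path order} together with the definitions of $\epsilon_l$ and $\epsilon_r$.

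First I would identify $im(\overline{\gamma_\theta})=\Gamma_{\frac{\pi}{2}}([\,t_l,\,\Gamma_{\frac{\pi}{2}}^{-1}(mm_\theta)\,])$ and $im(\overline{\Gamma_\theta})=\Gamma_{\frac{\pi}{2}}([\,\Gamma_{\frac{\pi}{2}}^{-1}(Mm_\theta),\,t_r\,])$; that is, both arcs stay on the upper portion of $\overline{\gamma}$ rather than detouring through the line segment on $L$. Indeed, $\overline{\gamma}$ has exactly two arcs between $mm_\theta$ and $\Gamma_{\frac{\pi}{2}}(t_l)$: a short one on the upper portion, and a long one passing through $Mm_\theta$, $\Gamma_{\frac{\pi}{2}}(t_r)$, and the segment on $L$. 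By Claim \ref{touch at end points only} applied to the Jordan curve $\overline{\gamma}$, $im(\overline{\gamma_\theta})$ can meet the line $x\sin\theta-y\cos\theta=m_\theta$ only at $\overline{mm_\theta}=mm_\theta$; since $Mm_\theta$ also lies on that line, the long arc is excluded. (In the degenerate sub-case $mm_\theta=Mm_\theta$, the two arcs split at the common endpoint and the same conclusion holds.) A symmetric argument identifies $\overline{\Gamma_\theta}$.

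Next I would apply Claim \ref{path order} twice. With $\theta_1=0$ and $\theta_2=\theta$ we get $\Gamma_{\frac{\pi}{2}}^{-1}(mm_\theta)\leq\Gamma_{\frac{\pi}{2}}^{-1}(mm_0)$, so the parameter range for $\overline{\gamma_\theta}$ lies in $[0,\,\Gamma_{\frac{\pi}{2}}^{-1}(mm_0)]$, and the definition of $\epsilon_r$ then forces $x_1\leq w-\epsilon_r$ for every $(x_1,y_1)\in im(\overline{\gamma_\theta})$. With $\theta_1=\theta<\epsilon_A=\theta_2$ (valid since $\epsilon<\epsilon_A$) we get $\Gamma_{\frac{\pi}{2}}^{-1}(Mm_\theta)\geq\Gamma_{\frac{\pi}{2}}^{-1}(Mm_{\epsilon_A})$, so the parameter range for $\overline{\Gamma_\theta}$ lies in $[\Gamma_{\frac{\pi}{2}}^{-1}(Mm_{\epsilon_A}),\,1]$, and the definition of $\epsilon_l$ then forces $x_2\geq\epsilon_l$ for every $(x_2,y_2)\in im(\overline{\Gamma_\theta})$. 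Combined with the trivial bounds $x_1\geq 0$ and $x_2\leq w$ coming from the region of Figure 11, any point of $\overline{\mathcal{M}_\theta}$ has $x$-coordinate $(x_1+x_2)/2$ lying in $[\epsilon_l/2,\,w-\epsilon_r/2]\subset(\epsilon_l/4,\,w-\epsilon_r/4)$, as required.

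The main obstacle will be the arc identification in the first step, in particular ruling out the long detour through $L$ for each of $\overline{\gamma_\theta}$ and $\overline{\Gamma_\theta}$; this is precisely where Claim \ref{touch at end points only} is invoked. Once the arcs are correctly identified, the rest is just monotonicity from Claim \ref{path order} together with a crude averaging inequality --- so crude, in fact, that the ``same line of angle $\theta$'' constraint on the pair $(x_1,y_1),(x_2,y_2)$ is not even needed.
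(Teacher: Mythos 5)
Your proposal is correct and follows essentially the same route as the paper: identify $im(\overline{\gamma_\theta})$ and $im(\overline{\Gamma_\theta})$ as sub-arcs $\Gamma_{\frac{\pi}{2}}|_{[t_l,\,\Gamma_{\frac{\pi}{2}}^{-1}(mm_\theta)]}$ and $\Gamma_{\frac{\pi}{2}}|_{[\Gamma_{\frac{\pi}{2}}^{-1}(Mm_\theta),\,t_r]}$, invoke Claim \ref{path order} to locate these parameter ranges inside $[0,\Gamma_{\frac{\pi}{2}}^{-1}(mm_0)]$ and $[\Gamma_{\frac{\pi}{2}}^{-1}(Mm_{\epsilon_A}),1]$ respectively, and then average with the trivial bounds $0\le x\le w$ to land in $[\epsilon_l/2,\,w-\epsilon_r/2]$. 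The only difference is cosmetic: you spell out the arc-identification step (ruling out the detour through $L$ via Claim \ref{touch at end points only}) which the paper asserts without comment.
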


\begin{proof}
Recall that by Claim \ref{path order}, we have

$$\Gamma_\frac{\pi}{2}^{-1}(Mm_{\epsilon_A})\leq\Gamma_\frac{\pi}{2}^{-1}(mm_\theta)\leq\Gamma_\frac{\pi}{2}^{-1}(Mm_\theta)\leq\Gamma_\frac{\pi}{2}^{-1}(mm_0)$$

So, we have

$$im(\overline{\gamma_\theta})=im(\Gamma_\frac{\pi}{2}|_{[t_l,\Gamma_\frac{\pi}{2}^{-1}(\overline{mm_\theta})]})=im(\Gamma_\frac{\pi}{2}|_{[t_l,\Gamma_\frac{\pi}{2}^{-1}(mm_\theta)]})\subseteq im(\Gamma_\frac{\pi}{2}|_{[0,\Gamma_\frac{\pi}{2}^{-1}(mm_0)]})$$

So, using the definition of $\epsilon_r$, we know that all points in $im(\overline{\gamma_\theta})$ have $x$-coordinate  less than or equal to $w-\epsilon_r$. Hence, by the definition of $\overline{\mathcal{M}_\theta}$, all points in $\overline{\mathcal{M}_\theta}$ have $x$-coordinate less than or equal to $\frac{(w-\epsilon_r)+w}{2}=w-\frac{\epsilon_r}{2}<w-\frac{\epsilon_r}{4}$.\\
\\
Similarly, 
$$im(\overline{\Gamma_\theta})=im(\Gamma_\frac{\pi}{2}|_{[\Gamma_\frac{\pi}{2}^{-1}([\overline{Mm_\theta}),t_r]})=im(\Gamma_\frac{\pi}{2}|_{[\Gamma_\frac{\pi}{2}^{-1}([Mm_\theta),t_r]})\subseteq im(\Gamma_\frac{\pi}{2}|_{[\Gamma_\frac{\pi}{2}^{-1}(Mm_{\epsilon_A}),1]})$$
Hence, all points in $im(\overline{\Gamma_\theta})$ have $x$-coordinate greater than or equal to $\epsilon_l$, and hence all points in $\overline{\mathcal{M}_\theta}$ have $x$-coordinate greater than or equal to $\frac{\epsilon_l+0}{2}=\frac{\epsilon_l}{2}>\frac{\epsilon_l}{4}$.
\end{proof}

We have proved Claim \ref{vertical pseudopath}. We now complete the remaining of the proof of Proposition \ref{2 corner}.\\
\\
Lemma \ref{median is pseudopath} and Claim \ref{vertical pseudopath} imply that in $R$, $\overline{\mathcal{M}_\theta}$ is a pseudopath between $Z_N$ and $Z_S$. Together with Claim \ref{horizontal pseudopath} and Lemma \ref{pseudopaths intersect}, we know that $\overline{\mathcal{M}_\theta}$ and $\mathcal{M}_{\theta+\frac{\pi}{2}}^R$ must intersect inside $R$.\\
\\
By Claim \ref{vertical pseudopath}, $\overline{\mathcal{M}_\theta}$ does not touch $Z_W$ nor $Z_E$. Hence, $\overline{\mathcal{M}_\theta}$ and $\mathcal{M}_{\theta+\frac{\pi}{2}}$ must intersect inside $R$. By Claim \ref{above line}, $\mathcal{M}_{\theta+\frac{\pi}{2}}$ does not touch the bottom edge of $R$. Note that $\mathcal{M}_{\theta+\frac{\pi}{2}}$ also does not touch the top edge of $R$ because each point in $\mathcal{M}_{\theta+\frac{\pi}{2}}$ is the mid-point of a point in $\Gamma_{\theta+\frac{\pi}{2}}$ (which cannot be above the top edge of $R$ because of the definition of $m_\theta$) and a point in $\gamma_{\theta+\frac{\pi}{2}}$ (which is strictly below the top edge of $R$). Hence, $\overline{\mathcal{M}_\theta}$ and $\mathcal{M}_{\theta+\frac{\pi}{2}}$ must intersect inside the interior of $R$. Let $Z$ be such an intersection.\\
\\
The point $Z$ corresponds to a rhombus with diagonals being a line of angle $\theta$ and a line of angle $\theta+\frac{\pi}{2}$. The two vertices that correspond to the diagonal of angle $\theta$ lie in $im(\overline{\gamma})$, and they are distinct because $Z$ does not lie in the top edge of $R$. In fact, those two vertices lie in $im(\gamma)$ because $Z$ does not lie in $L$. The two vertices that correspond to the diagonal of angle $\theta+\frac{\pi}{2}$ lie in $im(\gamma)$, and they are distinct because $p,q\notin R$.\\
\\
Hence, $Z$ corresponds to an inscribed rhombus of $\gamma$ of angle $\theta$.

\section{Discussion}
We offer some speculation on how one may use the ideas of this paper towards the original inscribed square problem. One possibility would be to prove that an inscribed rhombus of angle $\theta$ always exist even for those $\theta$ having special corners and not covered in Proposition \ref{2 corner}. The existence of such a rhombus seems intuitive by looking at how the pseudopaths $\mathcal{M}_\theta$ and $\mathcal{M}_{\theta+\frac{\pi}{2}}$ behave near the special points when $\gamma$ is nice enough, or by intuitively thinking about ``areas enclosed'' by those pseudopaths when they don't form ``loops''. Note that we have not defined those concepts in quotation marks for pseudopaths. Intuition comes from treating them as paths.\\
\\
At the same time, we may try to prove some continuity statements on those rhombi when $\theta$ varies. If both are being proven, then we may try to apply some intermediate value theorem arguments like what Arnold Emch did with nice analytic Jordan curves \cite{emch}.

\bibliographystyle{amsplain}
\bibliography{rhombi_proof}

\end{document}